\newcommand{\BMO}[0]{\operatorname{BMO}}
\DeclareMathOperator*{\esssup}{ess\;sup}
\DeclareMathOperator*{\essinf}{ess\;inf}
\newcommand{\R}{\mathbb{R}}
\swapnumbers \numberwithin{equation}{section}
\theoremstyle{plain}
\newtheorem{theorem}[equation]{Theorem}
\newtheorem{lemma}[equation]{Lemma}
\theoremstyle{definition}
\newtheorem{definition}[equation]{Definition}
\newtheorem{remark}[equation]{Remark}
 \def\@textbottom{\vskip \z@ \@plus 1pt}
 \let\@texttop\relax
\begin{document}

\title[Factorization of weights]{Factorizations for variable exponent Muckenhoupt weights}

\author[S. Lappas and T. Oikari]{Stefanos Lappas and Tuomas Oikari}
\address{Department of Mathematical Analysis, Faculty of Mathematics and Physics, Charles University, Sokolovsk\'a 83, 186 75 Praha 8, Czech Republic}
\email{stefanos.lappas@matfyz.cuni.cz; vlappas@hotmail.com}
\address{Department of Mathematics and Statistics, University of Helsinki, P.O.Box~68 (Pietari Kalmin katu~5), FI-00014 Helsinki, Finland}
\email{tuomas.v.oikari@helsinki.fi}

\thanks{S. L. was supported by the Primus research programme PRIMUS/21/SCI/002 of Charles University and the Foundation for Education and European Culture, founded by Nicos and Lydia Tricha. 
T.O. was supported by the Research Council of Finland through Project 358180.
}

\keywords{Weights, variable exponent, factorization, extrapolation, compact operator}
\subjclass[2020]{47B38 (Primary); 42B35, 46B70}




\begin{abstract}
Given two variable exponent Muckenhoupt weights $w\in A_{p(\cdot)}$ and $w_1\in A_{p_1(\cdot)}$, we prove that for all small enough $\theta>0,$ there holds that $w_0\in A_{p_0(\cdot)},$ where the weight is determined by $w = w_0^{1-\theta}w_1^{\theta}$ and exponent of the weight class by $1/p(\cdot) = (1-\theta)/p_0(\cdot) + \theta/p_1(\cdot).$
The proof is based on a recent reverse H\"older's inequality for variable exponent Muckenhoupt weights 
by Cruz-Uribe and Penrod. We upgrade these factorizations to the restricted range context by using a recent transformation formula due to Nieraeth. Then, following an extrapolation of compactness scheme by Hyt\"onen and Lappas, we provide an alternative proof of the recent extrapolation of compactness results of Lorist and Nieraeth in the context of weighted variable exponent Lebesgue spaces.
\end{abstract}

\maketitle


\section{Introduction}

The aim of this paper is to record factorization results and their proofs for variable exponent Muckenhoupt weights in the diagonal, off-diagonal and limited range contexts (see Lemmas \ref{lem:main1} and \ref{lem:main2} in the body-text). These factorizations lead, by a now somewhat well-known argument due to the first named author and Hyt\"onen \cite{HL}, to the corresponding compactness extrapolation results (see Theorems \ref{thm:main1} and \ref{thm:main2}) on weighted variable exponent Lebesgue spaces $L^{p(\cdot)}_w$ (we recall their definition soon). These extrapolation results are in fact special cases of Lorist's and Nieraeth's work \cite{LN} on extrapolation on Banach function spaces. Factorization results for weights are however of independent value and also provide a more hands-on approach to these extrapolation problems on weighted spaces.

\subsubsection*{Basic definitions and notation}  
We refer to the books of Cruz-Uribe, Fiorenza \cite{CF} and of Diening, Harjulehto, H{\"a}st{\"o}, R\r{u}\v{z}i\v{c}ka \cite{DHHR} for an in-depth exposition of variable exponent Lebesgue spaces.
Before turning to our contribution, we begin by gathering below the bare minimum of standard definitions and notations.
\begin{itemize}
  \item The class $\mathcal{P}(\mathbb{R}^d)$ stands for measurable functions $p(\cdot):\mathbb{R}^d\rightarrow[1,\infty],$ this is the collection of all variable exponents.
  \item 
Given $\mathcal{P}(\mathbb{R}^d)$, the variable exponent Lebesgue space $L^{p(\cdot)}(\R^d)$ is defined through the norm
\begin{equation*}
  \|f\|_{L^{p(\cdot)}(\R^d)}:=\inf\{\lambda>0:\rho_{p(\cdot)}(f/\lambda)\leq 1\},
\end{equation*}
where the modular is defined through
\begin{equation*}
  \rho_{p(\cdot)}(f):=\int_{\R^d\setminus\{p(\cdot)=\infty\}}|f(x)|^{p(x)}dx+\|f\|_{L^{\infty}(\{p(\cdot)=\infty\})}.
\end{equation*}
\item We abbreviate $\|f\|_{L^{p(\cdot)}(\R^d)} = \|f\|_{p(\cdot)}.$
\item  A weight $w$ is an almost everywhere positive function and is treated as a multiplier; we define the weighted space $L^{p(\cdot)}_w$ through the norm
$$
\|f\|_{L^{p(\cdot)}_w}:= \|fw\|_{p(\cdot)}.
$$
\item Given $p(\cdot)$, the conjugate exponent $p'(\cdot)$ is given by 
$1/p(\cdot)+ 1/p'(\cdot)=1.$
  \item Given $p(\cdot)\in\mathcal{P}(\mathbb{R}^d)$, we denote
    $p_{-}=\essinf_{x\in \mathbb{R}^d}p(x)$ and $p_{+}=\esssup_{x\in \mathbb{R}^d}p(x).$
  \item The harmonic mean $p_E$ on a set $E$ of positive and finite Lebesgue measure is defined by
\begin{equation*}
  \frac{1}{p_E}:=\displaystyle\stackinset{c}{}{c}{}{-\mkern4mu}{\displaystyle\int_E}\;\frac{1}{p(x)}dx=\frac{1}{|E|}\int_E\;\frac{1}{p(x)}dx.
\end{equation*}
  \item 
We drop the ambient space/dimension $\mathbb{R}^d$ from the notation whenever convenient, as it plays no particular role in our considerations.
\end{itemize}
Common background smoothness assumptions on the variable exponents are the following local and asymptotic H\"older continuity classes. We denote
\begin{itemize}
 \item\label{cond1} $p(\cdot)\in LH_0,$ if there exists $C_0>0$ such that, whenever $|x-y|<\tfrac{1}{2}$, then
    \begin{equation*}
        |p(x)-p(y)|\leq\frac{C_0}{-\log(|x-y|)};
    \end{equation*}
    \item\label{cond2} $p(\cdot)\in LH_{\infty},$ if there exist $p_{\infty}\in\R$ and $C_\infty>0$ such that
    \begin{equation*}
        |p(x)-p_{\infty}|\leq\frac{C_{\infty}}{\log(e+|x|)};
    \end{equation*}
    \item $LH := LH_0\cap LH_{\infty}.$
\end{itemize}

\subsection{Full range extrapolation} 
We begin by stating the extrapolation of compactness results which are proved in Section \ref{sect:last} and are easy corollaries of the factorization results proved in Section \ref{sect:fact}.
Without added difficulty, we work directly in the off-diagonal setting, meaning that in what follows we do not only consider $\gamma= 0$ but also $\gamma\in (0,1).$ 
Let $\gamma\in [0,1)$ be fixed and we denote
\begin{align*}
  \mathcal{E}_{(1,\infty)}^0 &:= \left\{ p(\cdot)\in \mathcal{P}\cap LH: 1<p_-\leq p_+<\infty \right\}, \\
\notag  \mathcal{E}^{\gamma}_{(1,\infty)} &:= \left\{ \big(p(\cdot),q(\cdot)\big) \in \mathcal{E}_{(1,\infty)}^0\times \mathcal{E}_{(1,\infty)}^0 : 1/p(\cdot) - 1/q(\cdot) = \gamma  \right\}.
\end{align*}
We consider the following class of variable exponent Muckenhoupt weights. 

\begin{definition}[\cite{CW2017}, Definition 2.10]\label{def:variableApq} Let $\gamma\in[0,1)$ and $p(\cdot),q(\cdot)\in\mathcal{P}$ be such that $1/p(\cdot)-1/q(\cdot)=\gamma.$ Then,
a weight $w$ belongs to the class
$\mathcal{A}_{p(\cdot),q(\cdot)}^{\gamma},$ if 
\begin{equation*}
  [w]_{\mathcal{A}_{p(\cdot),q(\cdot)}^{\gamma}}:=\sup_Q |Q|^{\gamma-1}\|w\chi_Q\|_{q(\cdot)}\|w^{-1}\chi_Q\|_{p'(\cdot)}<\infty,
\end{equation*}
where the supremum is taken over all cubes $Q\subset\R^d$.
\end{definition}
When $\gamma = 0$ we understand that $p(\cdot)\in\mathcal{E}_{(1,\infty)}^0$ is equivalent to $(p(\cdot),p(\cdot))\in\mathcal{E}_{(1,\infty)}^0.$ 

\begin{theorem}[Full range extrapolation of compactness]\label{thm:main1} 
Let $\gamma\in [0,1)$ and $T$ be a linear operator that is 
\begin{itemize}
  \item bounded from $L^{p_0(\cdot)}_{w_0}$ to $L^{q_0(\cdot)}_{w_0}$ for all $\big(p_0(\cdot),q_0(\cdot)\big) \in   \mathcal{E}^{\gamma}_{(1,\infty)}$ and
   for all $w_0\in\mathcal{A}_{p_0(\cdot),q_0(\cdot)}^{\gamma},$ 
  \item compact from $L^{p_1(\cdot)}_{w_1}$ to $L^{q_1(\cdot)}_{w_1}$ for some $\big(p_1(\cdot),q_1(\cdot)\big)\in\mathcal{E}^{\gamma}_{(1,\infty)}$ and
  for some $w_1\in\mathcal{A}_{p_1(\cdot),q_1(\cdot)}^{\gamma}.$ 
\end{itemize}
Then, the operator $T$ is 
\begin{itemize}
  \item compact from $L^{p(\cdot)}_w$ to $L^{q(\cdot)}_w$ for all $(p(\cdot),q(\cdot))\in \mathcal{E}^{\gamma}_{(1,\infty)}$ and for all $w \in\mathcal{A}_{p(\cdot),q(\cdot)}^{\gamma}.$
\end{itemize}
\end{theorem}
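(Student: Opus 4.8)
The plan is to follow the Hytönen--Lappas interpolation-with-a-compact-operator philosophy, adapted to the variable-exponent scale. The starting point is the well-known principle that if an operator is bounded on a family of spaces closed under an appropriate complex-interpolation procedure and compact on one of them, then it is compact on all of them; the technical engine here is a Krasnosel'ski{\u\i}-type interpolation theorem for compactness combined with a Rubio de Francia extrapolation step. Concretely, given a target pair $(p(\cdot),q(\cdot))\in\mathcal{E}^\gamma_{(1,\infty)}$ and a weight $w\in\mathcal{A}^\gamma_{p(\cdot),q(\cdot)}$, I would first \emph{not} attempt to interpolate directly between the ``compact'' data at $(p_1(\cdot),q_1(\cdot),w_1)$ and the target, since these may be far apart in an unhelpful way. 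Instead the idea is to build an analytic family of weighted variable-exponent spaces that connects the target space to the space where compactness is known, using that both $w$ and $w_1$ lie in the relevant $\mathcal{A}^\gamma$-classes, and that the classes $\mathcal{A}^\gamma_{p(\cdot),q(\cdot)}$ enjoy an openness/self-improvement property (so that $w^{1+\delta}$-type perturbations stay in a slightly shifted class). This is exactly the ``limited range, off-diagonal, weighted, variable-exponent'' analogue of the construction in \cite{HL}.

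The key steps, in order, would be: (i) Recall/establish the structural facts about the variable-exponent $\mathcal{A}^\gamma_{p(\cdot),q(\cdot)}$ classes that we need: a reverse-Hölder/openness statement allowing small real perturbations of the weight and of $1/p(\cdot), 1/q(\cdot)$ to remain admissible, and the duality-type identity relating $\mathcal{A}^\gamma_{p(\cdot),q(\cdot)}$ to $\mathcal{A}^\gamma$ of the conjugate data; these are available from \cite{CW2017} and the $LH$ hypotheses guarantee boundedness of the maximal operator on the relevant spaces. (ii) Set up a complex-analytic family $\{T_z\}$ (in our case $T$ is fixed, but the ambient space varies): choose spaces $X_z = L^{p_z(\cdot)}(w_z)$, $Y_z = L^{q_z(\cdot)}(w_z)$ depending analytically on $z$ in a strip, with $X_0,Y_0$ the target spaces and $X_1,Y_1$ (or some interior point) matching — after a further extrapolation — a space where $T$ is compact. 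Here one uses that $T$ is bounded on the \emph{entire} family $\mathcal{E}^\gamma_{(1,\infty)}$ with \emph{all} admissible weights, which gives the uniform boundedness needed along the whole strip. (iii) Invoke a Calderón--Zygmund/Cwikel-type interpolation theorem for compact operators on this analytic family of (weighted, variable-exponent) Banach function spaces — the relevant abstract statement is precisely what \cite{LN} isolates, or one can quote the classical Krasnosel'ski{\u\i}--Cwikel result in the form needed for Calderón couples — to conclude that $T$ is compact on the interior spaces, in particular on the target. (iv) A final Rubio de Francia-type extrapolation bootstraps from ``compact on some admissible $(p(\cdot),q(\cdot),w)$'' to ``compact on all admissible $(p(\cdot),q(\cdot),w)$'', closing the theorem; alternatively one arranges the analytic family so that the target is already reached directly.

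The main obstacle I expect is Step (iii) together with the compatibility conditions it demands: one needs the family of weighted variable-exponent spaces to form a genuine \emph{Calderón couple} (or at least to satisfy the hypotheses of whichever compact-interpolation theorem one uses), and one must verify that complex interpolation of the weighted variable-exponent spaces behaves as expected, i.e.\ $[L^{p_0(\cdot)}(w_0), L^{p_1(\cdot)}(w_1)]_\theta = L^{p_\theta(\cdot)}(w_\theta)$ with the correct reciprocal-exponent and geometric-mean-of-weights formulas, \emph{uniformly} enough that the interpolated norms do not degenerate. The $LH_0$ and $LH_\infty$ assumptions are what make this work (they are exactly the conditions under which the variable-exponent machinery — boundedness of $M$, sharp maximal function estimates, Rubio de Francia extrapolation — is available), so a careful bookkeeping of these hypotheses along the analytic family is the crux. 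A secondary technical point is ensuring the off-diagonal parameter $\gamma$ is preserved along the family, i.e.\ that $1/p_z(\cdot)-1/q_z(\cdot)=\gamma$ throughout, which forces the perturbations in $p_z$ and $q_z$ to be linked; this is straightforward but must be tracked. Everything else — the openness of the weight classes, the uniform boundedness, the final extrapolation — is, modulo the variable-exponent adaptations, routine in light of the results quoted in the excerpt.
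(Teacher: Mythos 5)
Your high-level plan --- combine the Cwikel--Kalton compact interpolation theorem with the complex-interpolation-with-change-of-weights identity for weighted variable $L^{p(\cdot)}(w)$ spaces, using a reverse-H\"older/openness property of the weight classes --- is indeed the route the paper takes. But two issues in your setup are genuine gaps rather than routine bookkeeping.

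First, the arrangement in your step (ii) is the wrong way around: you place the target space at the endpoint $X_0$ and the compactness data at $X_1$, and then invoke compact interpolation to conclude compactness at ``interior spaces, in particular on the target.'' But the target is not interior in your arrangement, and Theorem \ref{thm:CwKa} yields compactness only at interior parameters $\theta\in(0,1)$, never at the merely-bounded endpoint. Step (iv) cannot rescue this: a ``final Rubio de Francia-type extrapolation'' from ``compact on some admissible triple'' to ``compact on all admissible triples'' is precisely the theorem being proved, so invoking it is circular. Your parenthetical alternative --- arrange the analytic family so that the target is already reached directly --- is the only workable arrangement, but it is left unexplained, and explaining it is exactly the missing content.

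Second, the workable arrangement rests on a precise factorization lemma which you only gesture at, and this, rather than the Calder\'on-couple compatibility you flag as the main obstacle, is the technical crux. Given the target data $(p(\cdot),q(\cdot),w)$ and the compactness data $(p_1(\cdot),q_1(\cdot),w_1)$, one solves
\begin{equation*}
\frac{1}{p(\cdot)}=\frac{1-\theta}{p_0(\cdot)}+\frac{\theta}{p_1(\cdot)},\qquad
\frac{1}{q(\cdot)}=\frac{1-\theta}{q_0(\cdot)}+\frac{\theta}{q_1(\cdot)},\qquad
w=w_0^{1-\theta}w_1^{\theta}
\end{equation*}
for a ``third corner'' $(p_0(\cdot),q_0(\cdot),w_0)$, and the nontrivial claim (Lemma \ref{lem:main1}) is that for all sufficiently small $\theta$ this third corner satisfies $(p_0(\cdot),q_0(\cdot))\in\mathcal{E}^\gamma_{(1,\infty)}$ and $w_0\in\mathcal{A}^\gamma_{p_0(\cdot),q_0(\cdot)}$. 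Establishing the weight membership requires the reverse H\"older inequality for variable-exponent weights (Lemma \ref{lem:RHI}) together with the scaling Lemma \ref{lem:ApqScaling}, applied to both $w$ and $w_1$, plus a careful choice of auxiliary exponents so that H\"older's inequality leaves a spare factor $\|\chi_Q\|_{e(\cdot)}$ with a computable norm. Your description of the relevant openness --- ``$w^{1+\delta}$-type perturbations stay in a slightly shifted class'' --- does not capture what is needed: the object to control is $w_0=w^{1/(1-\theta)}w_1^{-\theta/(1-\theta)}$, a ratio mixing the target weight with the compactness weight, not a power perturbation of $w$ alone. By contrast, the complex-interpolation identity for weighted variable Lebesgue spaces is quoted off the shelf (Theorem \ref{thm:SW}), and the compact interpolation is Theorem \ref{thm:CwKa} with its lattice side condition verified trivially (Lemma \ref{lem:CK condition}); neither is the hard part.
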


Theorem \ref{thm:main1} is in fact a special circumstance in a more general limited range context.

\subsection{Limited range extrapolation}
We turn to the limited range setting. First, given $r<s$ we denote 
\begin{align*}
  \mathcal{E}_{(r,s)}^0 := \left\{ p(\cdot)\in \mathcal{P}\cap LH: r<p_-\leq p_+<s \right\}.
\end{align*}
For $\gamma\in[0,1)$ and $1\leq r_i<s_i\leq\infty$ for $i=1,2$, we denote
\begin{align*}
  \vec{\mathcal{E}}_{(\vec{r},\vec{s})}^{\gamma}:= \left\{(p(\cdot),q(\cdot)) \in  \mathcal{E}_{(r_1,s_1)}^0\times  \mathcal{E}_{(r_2,s_2)}^0:   \frac{1}{p(\cdot)}- \frac{1}{q(\cdot)} = \frac{1}{r_1}- \frac{1}{r_2} = \frac{1}{s_1}- \frac{1}{s_2} =\gamma \right\}.
\end{align*}

\begin{remark}\label{rem:Eempty}
    We understand that whenever we write $ (p(\cdot),q(\cdot)) \in  \vec{\mathcal{E}}_{(\vec{r},\vec{s})}^{\gamma},$ then the condition 
    $$
    \frac{1}{r_1}- \frac{1}{r_2} = \frac{1}{s_1}- \frac{1}{s_2} =\gamma
    $$ automatically holds, since otherwise $\vec{\mathcal{E}}_{(\vec{r},\vec{s})}^{\gamma}=\varnothing$.
\end{remark}

The following off-diagonal limited range variable weight class was introduced in Nieraeth \cite{N2023} in the paragraph on ``Variable Lebesgue spaces'' within the (lengthy) statement of \cite[Theorem A]{N2023}; the special case $p(\cdot)=q(\cdot)$, $r_1=r_2$, and $s_1=s_2$, is also contained in \cite[Definition 3.4]{N2023}. 

\begin{definition}[\cite{N2023}]\label{variableAp,r,s}
Let $\gamma\in[0,1)$ and $1\leq r_i\leq s_i \leq \infty,$ for $i=1,2$. Then, for
$(p(\cdot),q(\cdot))\in  \vec{\mathcal{E}}^\gamma_{(\vec r,\vec s)}$
the class $\mathcal{A}_{(p(\cdot),q(\cdot)),(\vec{r},\vec{s})}$ is defined through the condition
\begin{equation*}
  [w]_{(p(\cdot),q(\cdot)),(\vec{r},\vec{s})}:=\sup_Q|Q|^{-(\frac{1}{r_1}-\frac{1}{s_1})}\|w\chi_Q\|_{\frac{1}{\frac{1}{q(\cdot)}-\frac{1}{s_2}}}\|w^{-1}\chi_Q\|_{\frac{1}{\frac{1}{r_1}-\frac{1}{p(\cdot)}}}<\infty,
\end{equation*}
where the supremum is taken over all cubes $Q\subset\R^d$.

\begin{remark}
While the definition of $[w]_{(p(\cdot),q(\cdot)),(\vec{r},\vec{s})}$ only explicitly depends on $r_1,s_1,r_2$, but not on $s_2$, we note that $s_2$ is actually determined by the other three exponents, by Remark \ref{rem:Eempty}.
\end{remark}

Notice, unlike in the full range context, that the symbol $\mathcal{A}$ is not included in the notation $[w]_{(p(\cdot),q(\cdot)),(\vec{r},\vec{s})},$ following the notation of Nieraeth \cite{N2023}.
\end{definition}

\begin{theorem}[Limited range extrapolation of compactness]\label{thm:main2} 
Let $\gamma\in[0,1)$ and $r_i,s_i\in [1,\infty]$ satisfying $r_i<s_i$ for $i=1,2$. Let $T$ be a linear operator that is 
\begin{itemize}
  \item bounded from $L^{p_0(\cdot)}_{w_0}$ to $L^{q_0(\cdot)}_{w_0}$ for all $ (p_0(\cdot),q_0(\cdot)) \in   \vec{\mathcal{E}}_{(\vec{r},\vec{s})}^{\gamma}$ and for all $w_0\in\mathcal{A}_{(p_0(\cdot),q_0(\cdot)),(\vec{r},\vec{s})},$ 
  \item compact from $L^{p_1(\cdot)}_{w_1}$ to $L^{q_1(\cdot)}_{w_1}$ for some $(p_1(\cdot),q_1(\cdot))\in \vec{\mathcal{E}}_{(\vec{r},\vec{s})}^{\gamma}$ and \\
  \phantom{compact from $L^{p_1(\cdot)}(w_1)$ to $L^{q_1(\cdot)}(w_1)$ }for some $w_1\in\mathcal{A}_{(p_1(\cdot),q_1(\cdot)),(\vec{r},\vec{s})}.$ 
\end{itemize}
Then, the operator $T$ is
\begin{itemize}
  \item compact from $L^{p(\cdot)}_w$ to $L^{q(\cdot)}_w$ for all $\big(p(\cdot),q(\cdot)\big)\in \vec{\mathcal{E}}_{(\vec{r},\vec{s})}^{\gamma}$ and for all $w\in\mathcal{A}_{(p(\cdot),q(\cdot)),(\vec{r},\vec{s})}.$ 
\end{itemize}
\end{theorem}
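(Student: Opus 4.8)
The plan is to deduce compactness on an arbitrary target space by interpolating, in the complex sense, between the given compact space $L^{p_1(\cdot)}(w_1)\to L^{q_1(\cdot)}(w_1)$ and a nearby space on which $T$ is merely bounded, exploiting (i) the complex‑interpolation identity for weighted variable Lebesgue spaces, (ii) the classical interpolation of compactness for the complex method, and (iii) a stability/self‑improvement property of the limited‑range variable Muckenhoupt classes. Concretely, I would fix an arbitrary pair $(p(\cdot),q(\cdot))\in\vec{\mathcal{E}}_{(\vec{r},\vec{s})}^{\gamma}$ and weight $w\in\mathcal{A}_{(p(\cdot),q(\cdot)),(\vec{r},\vec{s})}$, together with the data $(p_1(\cdot),q_1(\cdot))$ and $w_1$ coming from the compactness hypothesis, and then manufacture a third admissible triple $\big(p_0(\cdot),q_0(\cdot),w_0\big)$ and a parameter $\theta\in(0,1)$ with
\begin{equation*}
  \frac1{p(\cdot)}=\frac{1-\theta}{p_0(\cdot)}+\frac{\theta}{p_1(\cdot)},\qquad
  \frac1{q(\cdot)}=\frac{1-\theta}{q_0(\cdot)}+\frac{\theta}{q_1(\cdot)},\qquad
  w=w_0^{\,1-\theta}w_1^{\,\theta}.
\end{equation*}

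For a small $\theta>0$ one is forced to take $\tfrac1{p_0(\cdot)}:=\tfrac1{1-\theta}\big(\tfrac1{p(\cdot)}-\tfrac{\theta}{p_1(\cdot)}\big)$, likewise for $q_0(\cdot)$, and $w_0:=w^{1/(1-\theta)}w_1^{-\theta/(1-\theta)}$; it then remains to see these choices are admissible. The off‑diagonal relation $\tfrac1{p_0(\cdot)}-\tfrac1{q_0(\cdot)}=\gamma$ follows at once from the corresponding relations for $(p(\cdot),q(\cdot))$ and $(p_1(\cdot),q_1(\cdot))$, since $(\tfrac1{p_0}-\tfrac1{q_0})(1-\theta)=(\tfrac1p-\tfrac1q)-\theta(\tfrac1{p_1}-\tfrac1{q_1})=\gamma(1-\theta)$. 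The range conditions $r_1<(p_0)_-\le (p_0)_+<s_1$, and their analogues for $q_0(\cdot)$, hold once $\theta$ is small, because $p(\cdot),q(\cdot)$ satisfy the strict inequalities and $\tfrac1{p_0(\cdot)}\to\tfrac1{p(\cdot)}$, $\tfrac1{q_0(\cdot)}\to\tfrac1{q(\cdot)}$ uniformly as $\theta\to0$. Finally $p_0(\cdot),q_0(\cdot)\in LH$, since $\tfrac1{p_0(\cdot)}$ is an affine combination of $\tfrac1{p(\cdot)},\tfrac1{p_1(\cdot)}$, the $LH$ bounds are stable under such combinations, and $p_0(\cdot)$ is bounded away from $1$ and $\infty$.

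The substantive point — and the step I expect to be the main obstacle — is to verify that $w_0=w^{1/(1-\theta)}w_1^{-\theta/(1-\theta)}$ lies in $\mathcal{A}_{(p_0(\cdot),q_0(\cdot)),(\vec{r},\vec{s})}$ for $\theta$ small enough; morally this is a weight‑factorization statement for the limited‑range variable Muckenhoupt classes. Here one needs their self‑improvement (``reverse Hölder'') and stability properties: because $w$ lies in such a class, so does $w^{a}$ for exponents $a$ slightly larger than $1$ after a small shift of $p(\cdot),q(\cdot)$; because $w_1$ lies in such a class, the factor $w_1^{-b}$ with $b>0$ small behaves like a harmless near‑constant factor; and these classes are closed under multiplying a weight that is already close to the target class by such a factor. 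I would isolate this as a lemma, proving it by combining the self‑improvement of the classes with Hölder's inequality applied to the two factors inside the defining quantity $[w_0]_{(p_0(\cdot),q_0(\cdot)),(\vec{r},\vec{s})}$, drawing on the variable‑exponent weight theory of Cruz-Uribe--Wang and Nieraeth.

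Granting the construction, the conclusion is immediate. By the complex‑interpolation identity for weighted variable Lebesgue spaces,
\begin{equation*}
  \big[L^{p_0(\cdot)}(w_0),L^{p_1(\cdot)}(w_1)\big]_{\theta}=L^{p(\cdot)}(w),\qquad
  \big[L^{q_0(\cdot)}(w_0),L^{q_1(\cdot)}(w_1)\big]_{\theta}=L^{q(\cdot)}(w),
\end{equation*}
with equivalent norms. By the first hypothesis of the theorem (applied to the admissible pair and weight just built) $T$ is bounded $L^{p_0(\cdot)}(w_0)\to L^{q_0(\cdot)}(w_0)$, and by the second hypothesis $T$ is compact $L^{p_1(\cdot)}(w_1)\to L^{q_1(\cdot)}(w_1)$; since an operator that is bounded on one endpoint couple and compact on the other is compact on every intermediate space of the complex method, $T$ is compact $L^{p(\cdot)}(w)\to L^{q(\cdot)}(w)$. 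As $(p(\cdot),q(\cdot))$ and $w$ were arbitrary, this proves the theorem. The same argument applies verbatim for $\gamma\in(0,1)$: the $p$‑ and $q$‑scales are interpolated independently and the constraint $\tfrac1{p(\cdot)}-\tfrac1{q(\cdot)}=\gamma$ is preserved along the interpolation, as checked above. (One could alternatively first interpolate down to a constant‑exponent weighted $L^r$, invoke the Hyt\"onen--Lappas extrapolation there, and interpolate back, but this still rests on the same weight‑stability lemma, so there is little to be gained.)
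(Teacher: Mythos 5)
Your overall strategy coincides with the paper's: manufacture the factorization triple $(p_0(\cdot),q_0(\cdot),w_0)$ with a small parameter $\theta$, use the complex-interpolation identity $[L^{p_0(\cdot)}(w_0),L^{p_1(\cdot)}(w_1)]_\theta=L^{p(\cdot)}(w)$ (likewise for the $q$-scale), and then invoke Cwikel--Kalton interpolation of compactness to conclude. Your checks of the $\gamma$-relation, the range conditions and the $LH$-stability of $1/p_0(\cdot)$ also match what the paper does. You have correctly located the crux in the step you call the ``main obstacle'', namely that $w_0=w^{1/(1-\theta)}w_1^{-\theta/(1-\theta)}\in\mathcal{A}_{(p_0(\cdot),q_0(\cdot)),(\vec r,\vec s)}$ for $\theta$ small. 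That step, however, is exactly the paper's technical core (Lemmas \ref{lem:main1} and \ref{lem:main2}), and you leave it as a sketched wish rather than a proof.

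What the paper actually does there, and what your sketch is missing, is twofold. First, the limited-range claim is not attacked directly: it is reduced to the full-range diagonal case by Nieraeth's change of variables $\phi_{r,s}(t)=(t-1/s)/(1/r-1/s)$ together with $w\mapsto w_{r,s}=w^{1/(1/r-1/s)}$, under which $[w]_{(p(\cdot),q(\cdot)),(\vec r,\vec s)}$ becomes a full-range constant $[w_{r_1,s_1}]_{\mathcal{A}_{p_{r_1,s_1}(\cdot)}}$; one then applies the full-range factorization and transforms back, taking care that the new exponent $q_0(\cdot)$ produced on the $q$-side is compatible with the $p$-side via $(q_0)_{r_2,s_2}(\cdot)=(p_0)_{r_1,s_1}(\cdot)$. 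Second, the full-range factorization itself is proved by inserting a slack parameter $\iota>0$ and auxiliary exponents $e(\cdot),u(\cdot)$ into a H\"older split of $\|w_0\chi_Q\|_{q_0(\cdot)}\|w_0^{-1}\chi_Q\|_{p_0'(\cdot)}$, rescaling via Lemma \ref{lem:ApqScaling} to pass to the diagonal classes $\mathcal{A}_{q(\cdot)/\sigma}$, and then applying the variable-exponent reverse H\"older inequality of Cruz-Uribe--Penrod (Lemma \ref{lem:RHI}); the whole thing works only because $t=(1+\iota)/(1-\theta)$ can be kept below the reverse-H\"older exponent by choosing $\iota$ and then $\theta$ small, and because the $|Q|$-powers from Lemma \ref{lem:char. function} and the reverse H\"older gain cancel exactly. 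Your phrase ``self-improvement plus H\"older on the two factors'' is pointing at the right ingredients, but without the reduction to the full-range case and without the quantitative bookkeeping of $\iota,\theta,t$ versus the reverse-H\"older exponent and the $|Q|$-powers, the factorization lemma is not established, and that is precisely the part of the argument that carries the novelty of the result.
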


The limited range context covers both the diagonal full/limited range and the off-diagonal full range contexts. 
\begin{itemize}
  \item To recover the diagonal full/limited range context, simply set $\gamma = 0$ to obtain $r_1=r_2$, $s_1=s_2$ and $p(\cdot)=q(\cdot)$.
  \item Given $\gamma\in [0,1)$, we take
   $r_1= 1,$ $s_1 =1/\gamma$ and $r_2=1/(1-\gamma),$ $s_2= \infty$
to recover the off-diagonal full range variable weights as in Definition \ref{def:variableApq}: 
$$
[w]_{\mathcal{A}_{p(\cdot),q(\cdot)}^{\gamma}}= [w]_{_{(p(\cdot),q(\cdot)),(\vec{r},\vec{s})}}.
$$
\end{itemize}

\begin{remark}
 Theorems \ref{thm:main1} and \ref{thm:main2} have their predecessors for constant exponent $L^p$ spaces in \cite{HL}. We observe the following differences. In the constant exponent case, the initial boundedness assumption is made for some exponent and all weights, while Theorems \ref{thm:main1} and \ref{thm:main2} make the initial assumption for all exponents (in a certain range) and all weights. In the constant exponent case, the boundedness with some exponent (and all weights) implies the boundedness for all exponents (and all weights) by the classical Rubio de Francia extrapolation of boundedness \cite{RdF}. Versions of Rubio de Francia's extrapolation for variable-exponent spaces are due to Cruz-Uribe--Wang \cite{CW2017} (with further extensions by Cao--Mar\'in--Martell \cite{CMM} and Nieraeth \cite{N2023}). 

However, in their results, the starting point of the extrapolation is boundedness for some {\em constant} exponent (and all weights), and variable exponents only appear in the conclusions. Such results would allow one to replace the boundedness assumption for all variable exponents $(p_0(\cdot),q_0(\cdot))$ by some constant exponents $(p_0,q_0)$ in the same class, but it seems open whether it is possible to start from the boundedness for just one non-constant variable exponent pair $(p_0(\cdot),q_0(\cdot))$.
We do not seem to have an analogue of Rubio de Francia's extrapolation theorem on weighted variable exponent Lebesgue spaces. Furthermore, we point out that Cruz-Uribe--Wang (see \cite[Theorem 2.7]{CW2017}) proved that operators satisfying the hypotheses of the extrapolation theorem for Muckenhoupt $A_p$ weights are bounded on weighted variable Lebesgue spaces. Similar results hold in the off-diagonal and limited range setting (see \cite[Theorems 2.11 and 2.14]{CW2017}). For further extensions of \cite{CW2017} we refer to the recent works by Cao--Mar\'in--Martell \cite{CMM} and Nieraeth \cite{N2023}. 
\end{remark}

\subsection{Acknowledgements and complements} This project was initiated in 2021 during our PhD studies at the University of Helsinki by Tuomas Hyt\"onen introducing us to his idea of extending the compactness extrapolation results of Hyt\"onen and Lappas \cite{HL} to the context of variable exponent Lebesgue spaces. Back then we got stuck due to the unavailability of appropriate reverse H\"older's inequalities. Luckily for us, these were provided recently by Cruz-Uribe and Penrod \cite[Theorem 1.1]{CP1} which allowed us to pick up and finish the project that we had started nearly four years ago. In between, the compactness extrapolation results of Lorist and Nieraeth \cite{LN} appeared which elegantly and simply yield the results of \cite{HL} in the context of variable exponent Lebesgue spaces. There are, however, scales of weighted spaces (such as Morrey spaces weighted with Muckenhoupt weights) which do not fall under the scope of the Lorist--Nieraeth extrapolation of compactness results. Possible future applications of such flavour, in the variable exponent context, are one of our motivations for writing up these factorization results.


\section{Preliminaries}

We next gather some known results  on variable exponent Lebesgue spaces that we will need.
The following is immediate from the definitions.
\begin{lemma}[\cite{CF}, Proposition 2.18]\label{lem:homog}
Given a measurable function $p(\cdot):\R^d\rightarrow (0,\infty)$ such that $p_{+}<\infty$, there holds that  $\||f|^s\|_{p(\cdot)}=\|f\|^s_{sp(\cdot)},$ for all $s>0.$ 
\end{lemma}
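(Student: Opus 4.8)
\textbf{Proof plan for Lemma~\ref{lem:homog}.}

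The statement $\||f|^s\|_{p(\cdot)}=\|f\|^s_{sp(\cdot)}$ is purely a matter of unwinding the Luxemburg-norm definition together with the modular. The plan is to compute both sides directly from
\[
\|g\|_{r(\cdot)}=\inf\{\lambda>0:\rho_{r(\cdot)}(g/\lambda)\le 1\},\qquad \rho_{r(\cdot)}(g)=\int_{\R^d}|g(x)|^{r(x)}\ud x,
\]
where we use the standing assumption $p_+<\infty$ (and hence $(sp)_+=s p_+<\infty$) so that the $L^\infty(\Omega_\infty)$-term in \eqref{eq:modular} is absent for both exponents $p(\cdot)$ and $sp(\cdot)$.

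First I would record the basic modular identity: for any $\mu>0$,
\[
\rho_{p(\cdot)}\!\Big(\frac{|f|^s}{\mu^s}\Big)=\int_{\R^d}\Big(\frac{|f(x)|^s}{\mu^s}\Big)^{p(x)}\ud x=\int_{\R^d}\Big(\frac{|f(x)|}{\mu}\Big)^{s\,p(x)}\ud x=\rho_{sp(\cdot)}\!\Big(\frac{|f|}{\mu}\Big).
\]
Then I would use this to match the two infima. Setting $\lambda=\mu^s$, we have $\lambda>0\iff\mu>0$, and $\rho_{p(\cdot)}(|f|^s/\lambda)\le 1\iff\rho_{sp(\cdot)}(|f|/\mu)\le 1$. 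Hence
\[
\|\,|f|^s\,\|_{p(\cdot)}=\inf\{\lambda>0:\rho_{p(\cdot)}(|f|^s/\lambda)\le1\}=\inf\{\mu^s:\mu>0,\ \rho_{sp(\cdot)}(|f|/\mu)\le1\}=\Big(\inf\{\mu>0:\rho_{sp(\cdot)}(|f|/\mu)\le1\}\Big)^{s},
\]
where the last step uses that $t\mapsto t^s$ is a continuous strictly increasing bijection of $(0,\infty)$ onto itself, so it commutes with the infimum. The right-hand side is exactly $\|f\|_{sp(\cdot)}^s$, which completes the proof.

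There is essentially no obstacle here; the only points requiring a word of care are (i) that $sp(\cdot)$ is again an admissible exponent with $(sp)_+<\infty$, so the same modular formula \eqref{eq:modular} (without the sup-term) applies, and (ii) the interchange of the power $s$ with the infimum, which is justified by monotonicity and continuity of $t\mapsto t^s$ on $(0,\infty)$. If $f$ is such that one (equivalently, both) of the norms is infinite, the displayed infima are over empty sets and the identity holds as $+\infty=+\infty$; otherwise both are finite and the computation above applies verbatim.
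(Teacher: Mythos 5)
Your computation is correct and is precisely the standard argument behind \cite[Proposition~2.18]{CF}, which the paper cites without reproducing: the modular identity $\rho_{p(\cdot)}(|f|^s/\mu^s)=\rho_{sp(\cdot)}(|f|/\mu)$ followed by the substitution $\lambda=\mu^s$ in the Luxemburg infimum, using monotonicity of $t\mapsto t^s$. The two points of care you flag (that $(sp)_+<\infty$ so the $L^\infty(\Omega_\infty)$ term drops, and that the power commutes with the infimum) are exactly the right ones, and the empty-infimum convention handles the infinite case; nothing is missing.
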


The next lemma addresses the norm of a characteristic function of a cube.
This result is stated in terms of the $\mathcal{A}_{p(\cdot)}$ condition, given in Definition \ref{def:variableApq} for $\gamma=0$. We state this special case here. In particular, given $p(\cdot)\in\mathcal{P}$, we say that $1\in\mathcal{A}_{p(\cdot)}$ if
\begin{equation*}
[1]_{\mathcal{A}_{p(\cdot)}}:=\sup_{Q}|Q|^{-1}\|\chi_Q\|_{p(\cdot)}\|\chi_Q\|_{p'(\cdot)}<\infty.
\end{equation*}

\begin{remark}\label{rem:1inApVar}
The fact that $p(\cdot)\in LH\cap\mathcal{P}$ with $p_{+}<\infty$ implies $[1]_{\mathcal{A}_{p(\cdot)}}<\infty$ can be found in \cite[Proposition 4.57]{CF}. Notice that there this condition is referred to as the $K_0$ condition. Moreover, as explained in \cite[Example 4.59]{CF} the $K_0$ condition is strictly weaker than $LH$ condition.
\end{remark}

\begin{lemma}[\cite{DHHR}, Lemma 4.5.3]\label{lem:char. function}
Let $p(\cdot)\in LH\cap\mathcal{P}$ with $p_{+}<\infty$. Then,
$\|\chi_Q\|_{p(\cdot)}\sim|Q|^{1/p_Q}$,
for every cube $Q\subset\R^d$ and the implicit constants depend on $p(\cdot)$ and $[1]_{\mathcal{A}_{p(\cdot)}}<\infty$.
\end{lemma}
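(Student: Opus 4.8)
The plan is to argue directly with the modular. Since $p_+<\infty$, we have $\rho_{p(\cdot)}(f)=\int_{\R^d}|f|^{p(x)}\,dx$, and $\lambda\mapsto\rho_{p(\cdot)}(\chi_Q/\lambda)=\int_Q\lambda^{-p(x)}\,dx$ is continuous and strictly decreasing from $+\infty$ to $0$; hence $\|\chi_Q\|_{p(\cdot)}$ is the unique $\lambda>0$ solving $\int_Q\lambda^{-p(x)}\,dx=1$. Writing $\lambda_Q:=|Q|^{1/p_Q}$, I would first reduce the assertion to the single two-sided bound
\[
  \int_Q\lambda_Q^{-p(x)}\,dx=\frac{1}{|Q|}\int_Q|Q|^{(p_Q-p(x))/p_Q}\,dx\sim 1,
\]
with constants depending only on $d$, $p_\pm$ and the $LH$-constants of $p(\cdot)$. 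Indeed, if this quantity lies in $[c_1,c_2]$, then for $c:=\max(c_2,c_1^{-1})^{1/p_-}\ge 1$ one gets $\int_Q(c\lambda_Q)^{-p(x)}\,dx\le c^{-p_-}c_2\le 1$ and $\int_Q(\lambda_Q/c)^{-p(x)}\,dx\ge c^{p_-}c_1\ge 1$, and the monotonicity just noted forces $\lambda_Q/c\le\|\chi_Q\|_{p(\cdot)}\le c\lambda_Q$. Since the harmonic mean satisfies $p_-(Q)\le p_Q\le p_+(Q)$, it now suffices to control $\big(p_+(Q)-p_-(Q)\big)\cdot|\log|Q||$ uniformly over all cubes $Q$.

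Next I would distinguish cases by the size of $Q$. If $\operatorname{diam}(Q)<\tfrac12$, the condition $LH_0$ together with the monotonicity of $t\mapsto C_0/(-\log t)$ on $(0,1)$ yields $p_+(Q)-p_-(Q)\le C_0/(-\log\operatorname{diam}(Q))$, and since $-\log\operatorname{diam}(Q)\gtrsim_d-\log|Q|$ this gives $\big(p_+(Q)-p_-(Q)\big)(-\log|Q|)\lesssim 1$. If $\operatorname{diam}(Q)\ge\tfrac12$ but $\ell(Q)$ is bounded by a fixed constant, then both $p_+(Q)-p_-(Q)\le p_+-p_-$ and $|\log|Q||$ are bounded, and the same bound holds trivially. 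In either case $|Q|^{(p_Q-p(x))/p_Q}\sim 1$ uniformly and the displayed average is $\sim 1$.

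The heart of the matter, and the step I expect to be the main obstacle, is the case of large cubes, which is where $LH_\infty$ enters. First I would show $|1/p_Q-1/p_\infty|\lesssim 1/\log|Q|$, equivalently $|p_Q-p_\infty|\log|Q|\lesssim 1$ (here $p_\infty\in[p_-,p_+]$). Using $|1/p(x)-1/p_\infty|\le C_\infty/\log(e+|x|)$, this reduces to
\[
  \frac{1}{|Q|}\int_Q\frac{dx}{\log(e+|x|)}\lesssim\frac{1}{\log|Q|},
\]
which I would prove by separating $\operatorname{dist}(0,Q)\ge\ell(Q)$ (so $|x|\ge\ell(Q)$ on $Q$) from $\operatorname{dist}(0,Q)<\ell(Q)$ (so $Q\subset B(0,c_d\ell(Q))$, then a dyadic splitting of $\int_0^{c_d\ell(Q)}r^{d-1}/\log(e+r)\,dr\lesssim_d\ell(Q)^d/\log\ell(Q)$). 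Granting this and using $|p_Q-p(x)|\le C/\log|Q|+C_\infty/\log(e+|x|)$, I would decompose $Q=Q_{\mathrm{near}}\cup Q_{\mathrm{far}}$ with $Q_{\mathrm{near}}=\{x\in Q:|x|<\ell(Q)^{p_-/p_+}\}$. On $Q_{\mathrm{far}}$ one has $\log(e+|x|)\gtrsim\log|Q|$, so $|Q|^{(p_Q-p(x))/p_Q}\sim 1$ there, and the corresponding part of the average equals $\sim|Q_{\mathrm{far}}|/|Q|$, which lies in $[\tfrac12,1]$ once $\ell(Q)$ exceeds a threshold (smaller $\ell(Q)$ being subsumed in the medium case). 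On $Q_{\mathrm{near}}$ the exponent satisfies $(p_Q-p(x))/p_Q\le 1-p_-/p_Q\le 1-p_-/p_+$ where it is nonnegative, so $|Q|^{(p_Q-p(x))/p_Q}\le|Q|^{1-p_-/p_+}$, and since $|Q_{\mathrm{near}}|\lesssim_d\ell(Q)^{d\,p_-/p_+}$ the product $|Q_{\mathrm{near}}|\cdot|Q|^{1-p_-/p_+}\lesssim_d|Q|$, so this part of the average is $\lesssim 1$. Altogether the average is $\sim 1$ for large cubes as well, which finishes the proof.

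The delicate point is thus the large-cube estimate: one needs both that $p_Q$ approximates $p_\infty$ at the precise scale $1/\log|Q|$ (which is what forces the radial integral bound for $1/\log(e+|x|)$) and a splitting of the cube at the \emph{right} radius---the power $\ell(Q)^{p_-/p_+}$, not a naive choice like $\ell(Q)^{1/2}$---so that the near-origin part, on which $p(x)$ may differ from $p_Q$ by $O(1)$ rather than $O(1/\log|Q|)$, still contributes only $O(1)$ to the average. For the range $1<p_-\le p_+<\infty$ relevant to this paper there is also a shortcut for the lower bound: prove only $\|\chi_Q\|_{p(\cdot)}\lesssim|Q|^{1/p_Q}$, apply it also to $p'(\cdot)$ (which again lies in $LH\cap\mathcal{P}$ with $(p')_+<\infty$, and has $1/(p')_Q=1-1/p_Q$), and combine with the variable-exponent H\"older inequality $|Q|=\|\chi_Q\|_1\lesssim\|\chi_Q\|_{p(\cdot)}\,\|\chi_Q\|_{p'(\cdot)}$ to get $|Q|^{1/p_Q}=|Q|\cdot|Q|^{1/p_Q-1}\lesssim\|\chi_Q\|_{p(\cdot)}$.
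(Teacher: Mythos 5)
Your proposal is essentially correct and follows the same modular-based strategy that underlies the cited reference [DHHR, Cor.~4.5.9]; the paper itself does not reprove the lemma, so I will evaluate your argument on its own terms and against the standard proof.

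The reduction to showing $\frac{1}{|Q|}\int_Q|Q|^{(p_Q-p(x))/p_Q}\,dx\sim 1$ is correct, and the passage from a two-sided modular bound back to a norm bound via the scaling $c=\max(c_2,c_1^{-1})^{1/p_-}$ is sound. The small-cube case via $LH_0$ and the comparison $-\log|Q|\lesssim_d-\log\operatorname{diam}(Q)$ is fine, and the intermediate case is trivial as you say. In the large-cube case, the two steps you identify as the crux are indeed the crux: the estimate $|p_Q-p_\infty|\lesssim 1/\log|Q|$ (your radial-integral bound for $\int_0^{c_d\ell}r^{d-1}/\log(e+r)\,dr$ checks out), and the split at radius $\ell(Q)^{p_-/p_+}$. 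Your observation that a naive power such as $\ell(Q)^{1/2}$ would impose the artificial restriction $p_+\le 2p_-$ is correct, and $p_-/p_+$ is precisely the largest exponent for which $|Q_{\mathrm{near}}|\cdot|Q|^{1-p_-/p_+}\lesssim|Q|$. The H\"older shortcut for the lower bound is a nice and legitimate simplification in the range $1<p_-\le p_+<\infty$ that the paper actually uses.

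Two small points worth cleaning up. First, the step $|Q_{\mathrm{far}}|/|Q|\ge\frac12$ for $\ell(Q)$ large uses $|Q_{\mathrm{near}}|/|Q|\lesssim\ell(Q)^{d(p_-/p_+-1)}\to 0$, which fails when $p_-=p_+$; you should simply dispose of the constant-exponent case at the outset (there the integrand is identically $1$ and the lemma is trivial). Second, the lemma's statement says the constants depend on $p(\cdot)$ and $[1]_{\mathcal{A}_{p(\cdot)}}$; your constants depend on $d$, $p_\pm$, and the $LH$-constants, from which $[1]_{\mathcal{A}_{p(\cdot)}}$ is in turn controlled (Remark~\ref{rem:1inApVar}), so this is consistent, but in [DHHR] and related literature the result is often phrased under the weaker $K_0$-condition (i.e., $[1]_{\mathcal{A}_{p(\cdot)}}<\infty$) directly, which is strictly more general than $LH$; your argument genuinely uses $LH$ and does not recover that generality, which is worth noting even though the paper's hypotheses supply $LH$.
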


Another basic result we use is H\"older's inequality in the variable exponent setting.

\begin{lemma}[\cite{TWX}, Lemma 6.2]\label{Gen. Holder's ineq.}
Given $p_1(\cdot),\dots,p_m(\cdot)\in\mathcal{P}$, define $p(\cdot)\in\mathcal{P}$ by
\begin{equation*}
  \frac{1}{p(\cdot)}=\sum_{j=1}^m\frac{1}{p_j},
\end{equation*}
where $j=1,\dots,m$. Then, for all $f_j\in L^{p_j(\cdot)}$ and $f_1\cdots f_m\in L^{p(\cdot)}$ one has
\begin{equation}\label{Holder's impl. constant}
  \|f_1\cdots f_m\|_{L^{p(\cdot)}}\lesssim\|f_1\|_{L^{p_1(\cdot)}}\cdots\|f_m\|_{L^{p_m(\cdot)}},
\end{equation}
where the implicit constant depends only on the $p_j(\cdot)$.
\end{lemma}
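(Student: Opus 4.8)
The plan is to reduce the statement to a single pointwise application of the weighted arithmetic--geometric mean (AM--GM) inequality, and then to pass from the resulting modular estimate to the norm estimate via the standard unit-ball/modular correspondence for Luxemburg norms. First I normalize: since each $f_j\in L^{p_j(\cdot)}$, the quantity $\lambda_j:=\|f_j\|_{L^{p_j(\cdot)}}$ is finite, and if some $\lambda_j=0$ the asserted inequality is trivial, so we may assume $\lambda_j\in(0,\infty)$ for all $j$. By the positive homogeneity of the Luxemburg functional (immediate from its definition) we replace $f_j$ by $g_j:=f_j/\lambda_j$, so that $\|g_j\|_{L^{p_j(\cdot)}}=1$; by the unit-ball property $\|h\|_{L^{p(\cdot)}}\le1\iff\rho_{p(\cdot)}(h)\le1$ (and under the standing assumption $p_+<\infty$ the $L^\infty(\Omega_\infty)$ term in \eqref{eq:modular} is absent) this gives $\rho_{p_j(\cdot)}(g_j)\le1$ for each $j$.

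The main step is a pointwise bound. For a.e.\ $x$ put $\theta_j(x):=p(x)/p_j(x)\in[0,1]$; then $\sum_{j=1}^m\theta_j(x)=p(x)\sum_{j=1}^m 1/p_j(x)=1$ by the hypothesis $1/p(\cdot)=\sum_{j=1}^m 1/p_j(\cdot)$, so $(\theta_j(x))_{j=1}^m$ is a probability vector. Applying AM--GM, $\prod_j a_j^{\theta_j}\le\sum_j\theta_j a_j$, with $a_j=|g_j(x)|^{p_j(x)}$, and then using $\theta_j(x)\le1$, we get
\[
  \bigl|g_1(x)\cdots g_m(x)\bigr|^{p(x)}=\prod_{j=1}^m|g_j(x)|^{p(x)}=\prod_{j=1}^m a_j^{\theta_j(x)}\le\sum_{j=1}^m\theta_j(x)\,|g_j(x)|^{p_j(x)}\le\sum_{j=1}^m|g_j(x)|^{p_j(x)}.
\]
Integrating over $\R^d$ yields the modular estimate $\rho_{p(\cdot)}(g_1\cdots g_m)\le\sum_{j=1}^m\rho_{p_j(\cdot)}(g_j)\le m$ (in the general case $\Omega_\infty\neq\varnothing$ one also bounds the $L^\infty(\Omega_\infty)$ contribution, which is immediate since $\Omega_\infty\subseteq\{p_j=\infty\}$ for every $j$).

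It remains to convert the modular bound into a norm bound. Since $p(\cdot)\in\mathcal{P}$ (as assumed in the statement), $p_-:=\essinf_x p(x)\ge1$; hence, for $\lambda\ge1$, the elementary inequality $\lambda^{-p(x)}\le\lambda^{-p_-}$ gives $\rho_{p(\cdot)}\bigl((g_1\cdots g_m)/\lambda\bigr)\le\lambda^{-p_-}\rho_{p(\cdot)}(g_1\cdots g_m)\le m\lambda^{-p_-}$, and choosing $\lambda=m^{1/p_-}$ makes the right-hand side equal to $1$, so $\|g_1\cdots g_m\|_{L^{p(\cdot)}}\le m^{1/p_-}$ (in particular $g_1\cdots g_m\in L^{p(\cdot)}$). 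Undoing the normalization gives $\|f_1\cdots f_m\|_{L^{p(\cdot)}}\le m^{1/p_-}\prod_{j=1}^m\|f_j\|_{L^{p_j(\cdot)}}$, with the implicit constant $m^{1/p_-}$ depending only on the exponents $p_j(\cdot)$. I do not expect a genuine obstacle here; the only points needing care are verifying that the $\theta_j(x)$ form a probability vector for a.e.\ $x$, so that the pointwise AM--GM step applies, and the one-sided scaling of the modular, which uses $p_-\ge1$. (Alternatively one could prove the case $m=2$ as above and induct, splitting $1/p(\cdot)=1/p_1(\cdot)+1/q(\cdot)$ with $1/q(\cdot)=\sum_{j\ge2}1/p_j(\cdot)$; the direct argument above is shorter and yields the explicit constant.)
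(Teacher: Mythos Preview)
Your argument is correct. The normalization to unit norm, the pointwise weighted AM--GM with weights $\theta_j(x)=p(x)/p_j(x)$ (which indeed form a probability vector for a.e.\ $x$ by the defining relation $1/p(\cdot)=\sum_j 1/p_j(\cdot)$), and the passage from the modular bound $\rho_{p(\cdot)}(g_1\cdots g_m)\le m$ to the norm bound via $\lambda^{-p(x)}\le\lambda^{-p_-}$ for $\lambda\ge 1$ are all sound under the paper's standing assumption $p_+<\infty$. The edge case involving $\Omega_\infty$ is handled adequately for the purposes here.

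Your route, however, differs from the paper's. The paper does not supply a proof but only a remark: the case $m=2$ is taken from \cite[Corollary~2.28]{CF}, and the general case is obtained by induction, yielding the constant $5^{m-1}$. You give a direct, self-contained argument via a single AM--GM step and obtain the constant $m^{1/p_-}\le m$, which is considerably sharper for large $m$ than the paper's $5^{m-1}$. The inductive scheme (which you also mention as an alternative) has the advantage of reducing everything to a well-documented two-exponent H\"older inequality, while your direct approach avoids any external reference and makes the dependence of the constant on the exponents completely explicit.
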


\begin{remark}
The H\"older's inequality for two variable exponents can be found in \cite[Corollary 2.28]{CF}. This fact together with an induction argument imply \cite[Lemma 6.2]{TWX}. Moreover, from the proofs of \cite[Corollary 2.28]{CF} and \cite[Lemma 6.2]{TWX} one sees that the implicit constant appearing in \eqref{Holder's impl. constant} is at most $5^{m-1}$, where $m\ge2$.
\end{remark}

A key role will be played by the following recent reverse H\"older's inequality for variable exponent Muckenhoupt weights.

\begin{theorem}[\cite{CP1}, Theorem 1.1]\label{thm:RHI}
Let $r(\cdot)\in LH\cap\mathcal{P}$ with $r_{+}<\infty$ and let $u\in \mathcal{A}_{r(\cdot)}.$ Then, there exists an exponent $s>1$ such that for all cubes $Q\subset \R^d$,
\begin{equation*}
  |Q|^{-\frac{1}{s{r_Q}}}\|u\chi_Q\|_{L^{sr(\cdot)}}\lesssim|Q|^{-\frac{1}{r_Q}}\|u\chi_Q\|_{L^{r(\cdot)}},
\end{equation*}
where the implicit constant depends only on $r(\cdot)$. 
\end{theorem}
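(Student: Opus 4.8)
The plan is to reduce the assertion, one cube at a time, to the classical reverse H\"older inequality for Muckenhoupt $A_\infty$ weights. Fix a cube $Q$. By Lemma~\ref{lem:homog} we have $\|u\chi_Q\|_{sr(\cdot)}=\|u^s\chi_Q\|_{r(\cdot)}^{1/s}$, and $\tfrac{1}{(sr)_Q}=\tfrac{1}{sr_Q}$, so the claimed inequality is equivalent to
\begin{equation*}
  \|u^s\chi_Q\|_{r(\cdot)}\lesssim |Q|^{\frac{1-s}{r_Q}}\,\|u\chi_Q\|_{r(\cdot)}^{s}.
\end{equation*}
Since $\mathcal{A}_{r(\cdot)}$ is invariant under $u\mapsto\lambda u$ and the last inequality is homogeneous in $u$, for each fixed $Q$ I may rescale $u$ so that $\|u\chi_Q\|_{r(\cdot)}=|Q|^{1/r_Q}$, which by Lemma~\ref{lem:char. function} is $\sim\|\chi_Q\|_{r(\cdot)}$; the target then reduces to $\|u^{s}\chi_Q\|_{r(\cdot)}\lesssim|Q|^{1/r_Q}$, that is, by the definitions of the norm and of the modular \eqref{eq:modular}, to
\begin{equation*}
  \int_Q\Big(\frac{u(x)^{s}}{C\,|Q|^{1/r_Q}}\Big)^{r(x)}\,dx\le 1
\end{equation*}
for a constant $C$ depending only on $r(\cdot)$ and $[u]_{\mathcal{A}_{r(\cdot)}}$ (and not on $Q$ or on the rescaling).

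The heart of the proof is to trade the variable exponent $r(\cdot)$ for the local constant exponent $r_Q$ on the cube $Q$ and then invoke the classical theory. This is precisely Diening's ``key estimate'', the transference already underlying Lemma~\ref{lem:char. function}: for $r(\cdot)\in LH$ with $r_+<\infty$ the modular $g\mapsto\int_{Q'}g^{r(x)}\,dx$ over a cube $Q'$ is two-sidedly comparable, after adjusting multiplicative constants and adding error terms that are negligible on small cubes (via $LH_0$) and absorbed by the standard near/far decomposition of $Q'$ on large cubes (via $LH_\infty$), to the constant-exponent modular $g\mapsto\int_{Q'}g^{r_{Q'}}\,dx$, uniformly in $Q'$; see \cite[\S4]{CF}, \cite[\S4.1--4.2]{DHHR}. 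Combining this transference (for $r(\cdot)$ and for $r'(\cdot)$) with Lemma~\ref{lem:char. function}, the hypothesis $[u]_{\mathcal{A}_{r(\cdot)}}<\infty$ of Definition~\ref{def:variableApq} with $\gamma=0$ becomes a bound, uniform over all cubes $Q'$, for the constant-exponent Muckenhoupt functional $\big(\tfrac{1}{|Q'|}\int_{Q'}u^{r_{Q'}}\big)^{1/r_{Q'}}\big(\tfrac{1}{|Q'|}\int_{Q'}u^{-r'_{Q'}}\big)^{1/r'_{Q'}}$, which — since $1\le r_-\le r_{Q'}\le r_+<\infty$ — is exactly the scale-uniform $A_\infty$-type information needed to run the classical Calder\'on--Zygmund stopping-time argument on the cube $Q$. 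That argument then furnishes $\eps>0$ and a constant $C'$, depending only on $d$, $r(\cdot)$ and $[u]_{\mathcal{A}_{r(\cdot)}}$, with $\tfrac{1}{|Q|}\int_Q u^{(1+\eps)r_Q}\le C'\big(\tfrac{1}{|Q|}\int_Q u^{r_Q}\big)^{1+\eps}$, whose right-hand side is $\lesssim 1$ by the normalization (transferred back through the same comparison). Taking $s:=1+\eps$ — shrinking $\eps$ a little, if needed, to absorb the additive errors — and applying the transference once more in the direction from $r_Q$ to $r(\cdot)$, now to $u^s\chi_Q$, the estimate $\tfrac{1}{|Q|}\int_Q u^{sr_Q}\,dx\lesssim 1$ turns into the displayed modular bound. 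Since $\eps$ and all constants are uniform in $Q$, the same $s$ works for every cube.

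The only genuine obstacle is this transference step: obtaining the two-sided comparison between the modular at the variable exponent $r(\cdot)$ and the modulars at the local constant exponents $r_{Q'}$, with constants independent of the cube, and — in particular — handling large cubes, where $r(\cdot)$ is merely asymptotically constant and one must split $Q'$ into a bounded ``near'' part (of negligible relative measure once $|Q'|$ is large) and a ``far'' part on which $r(\cdot)\approx r_\infty\approx r_{Q'}$ by $LH_\infty$, exactly as in the proof of Lemma~\ref{lem:char. function}. A subsidiary point is the uniformity in $Q$ of the higher-integrability exponent produced by the classical reverse H\"older inequality; this is automatic, since that exponent depends only on $d$ and on the $A_\infty$ constant, both uniform here. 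One could also bypass the classical inequality and prove the displayed modular bound directly by a Calder\'on--Zygmund decomposition of $Q$ adapted to $\rho_{r(\cdot)}$, but the route above has the advantage of confining all variable-exponent input to the single transference lemma on which the paper already relies.
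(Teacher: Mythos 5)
The paper does not prove this statement: it is imported verbatim as \cite[Theorem 1.1]{CP1} and used as a black box, so there is no internal proof to compare your argument against. Your sketch therefore has to stand on its own, and as written it does not: the ``transference'' on which it hinges is not an available lemma but essentially the theorem you are trying to prove.

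Concretely, there are two gaps. First, the asserted two-sided comparison between the modulars $\int_{Q'}g^{r(x)}\,dx$ and $\int_{Q'}g^{r_{Q'}}\,dx$, uniformly in $Q'$ and in $g$, is false as stated. The Diening-type key estimate underlying Lemma \ref{lem:char. function} (see \cite[Lemma 4.2.1]{DHHR}) compares $t^{r(x)}$ with $t^{r_{Q'}}$ only for $t\in[0,1]$, and even then only up to additive error terms governed by $LH_0$ and $LH_\infty$; for an unbounded $g$ (and a weight $u$ is in general unbounded, even after your per-cube rescaling) the two modulars can differ by arbitrarily much. You acknowledge the error terms and the near/far split, but you do not carry them out, and the claim that this is ``exactly as in the proof of Lemma \ref{lem:char. function}'' is misleading: that lemma concerns $g=\chi_Q$, where $t\in\{0,1\}$, which is the one case in which the error terms trivialize.

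Second, even granting the modular transference, the constant-exponent information you extract from $[u]_{\mathcal{A}_{r(\cdot)}}$ is a bound, for each cube $Q'$, on the $A_{r_{Q'}}$-type ratio of $u$ \emph{at the local exponent $r_{Q'}$}. Your application of the classical Calder\'on--Zygmund stopping-time argument at a fixed cube $Q$ with fixed exponent $r_Q$ needs an $A_\infty$-type bound for the fixed weight $u^{r_Q}$ over all subcubes $Q'\subset Q$. That is a different statement: on a subcube $Q'$ your hypothesis controls $u^{r_{Q'}}$, not $u^{r_Q}$, and the two exponents drift apart as $Q'$ shrinks or moves away, at a rate you would need to quantify and absorb. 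Handling exactly this drift is the substantive content of \cite{CP1}, and it is not a routine consequence of Lemma \ref{lem:char. function} or of the citations you make. In short: the strategy (reduce to the classical reverse H\"older inequality via local-exponent freezing) is plausible and is probably not far from the truth, but the step you identify as ``the only genuine obstacle'' is the whole theorem, and it is left unproved.
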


In fact, we will use the following corollary, which states that the reverse H\"older's inequality does not only hold for the single $s,$ but in a specific way also for all $t\in(1,s).$

\begin{lemma}[\cite{CP1}, Corollary 4.3]\label{lem:RHI}
Let $r(\cdot)\in LH\cap\mathcal{P}$ with $r_{+}<\infty$. If $u\in \mathcal{A}_{r(\cdot)}$, then there exists an exponent $s>1$ such that for all $t\in(1,s)$ and for all cubes $Q\subset\R^d$, 
\begin{equation*}
  |Q|^{-\frac{1}{{tr_Q}}}\|u\chi_Q\|_{L^{tr(\cdot)}}\lesssim|Q|^{-\frac{1}{r_Q}}\|u\chi_Q\|_{L^{r(\cdot)}},
\end{equation*}
where the implicit constant depends only on $r(\cdot)$ and $[1]_{\mathcal{A}_{\widetilde{v}(\cdot)}}<\infty$, where 
\begin{equation*}
  \widetilde{v}(\cdot):=\frac{r(\cdot)}{\frac1t-\frac1s}.
\end{equation*}
\end{lemma}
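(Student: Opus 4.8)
\textbf{Proof plan for Lemma \ref{lem:RHI}.}

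The plan is to derive the range-valued reverse H\"older inequality from the single-exponent version in Theorem \ref{thm:RHI} by an interpolation argument in the exponent, carried out at the level of the modular characterization of the variable norm. First I would apply Theorem \ref{thm:RHI} to $r(\cdot)$ and $u\in\mathcal A_{r(\cdot)}$ to produce the critical exponent $s>1$ together with the estimate $|Q|^{-1/(sr_Q)}\|u\chi_Q\|_{sr(\cdot)}\lesssim|Q|^{-1/r_Q}\|u\chi_Q\|_{r(\cdot)}$. The goal is to interpolate this against the trivial endpoint $t=1$, where $|Q|^{-1/r_Q}\|u\chi_Q\|_{r(\cdot)}$ appears on both sides, to get the claim for every intermediate $t\in(1,s)$.

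The key step is to write, for fixed $t\in(1,s)$, a relation of the form $\tfrac1{tr(\cdot)}=\tfrac{\theta}{sr(\cdot)}+\tfrac{1-\theta}{r(\cdot)}$ with $\theta=\theta(t,s)\in(0,1)$; solving gives $\theta=\frac{s(t-1)}{t(s-1)}$, so that $tr(\cdot)$ is the ``$\theta$-interpolation'' exponent between $sr(\cdot)$ and $r(\cdot)$. By the standard interpolation inequality for variable Lebesgue spaces (which follows from H\"older's inequality, Lemma \ref{Gen. Holder's ineq.}, applied to $|u\chi_Q|^{1/(tr(\cdot))}=|u\chi_Q|^{\theta/(sr(\cdot))}\cdot|u\chi_Q|^{(1-\theta)/r(\cdot)}$ after using the homogeneity Lemma \ref{lem:homog} to pull the powers outside the norm) one gets
\begin{equation*}
\|u\chi_Q\|_{tr(\cdot)}\lesssim\|u\chi_Q\|_{sr(\cdot)}^{\theta}\,\|u\chi_Q\|_{r(\cdot)}^{1-\theta}.
\end{equation*}
Multiplying by $|Q|^{-1/(tr_Q)}$, splitting the exponent of $|Q|$ according to $\tfrac1{tr_Q}=\tfrac{\theta}{sr_Q}+\tfrac{1-\theta}{r_Q}$ (here I would invoke Lemma \ref{lem:char. function}, which needs $[1]_{\mathcal A_{\widetilde v(\cdot)}}<\infty$ for the relevant exponent $\widetilde v(\cdot)=r(\cdot)/(\tfrac1t-\tfrac1s)$ — this is where that auxiliary constant enters the implicit bound), and then inserting Theorem \ref{thm:RHI} into the first factor and bounding the second factor trivially gives
\begin{equation*}
|Q|^{-\frac{1}{tr_Q}}\|u\chi_Q\|_{tr(\cdot)}\lesssim\Big(|Q|^{-\frac{1}{sr_Q}}\|u\chi_Q\|_{sr(\cdot)}\Big)^{\theta}\Big(|Q|^{-\frac{1}{r_Q}}\|u\chi_Q\|_{r(\cdot)}\Big)^{1-\theta}\lesssim|Q|^{-\frac{1}{r_Q}}\|u\chi_Q\|_{r(\cdot)},
\end{equation*}
which is exactly the assertion, with an implicit constant depending only on $r(\cdot)$ and on $[1]_{\mathcal A_{\widetilde v(\cdot)}}$.

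I expect the main obstacle to be bookkeeping rather than a genuine difficulty: one must check that the interpolation inequality for the variable norms holds with an implicit constant independent of $Q$ (which it does, since the constant in Lemma \ref{Gen. Holder's ineq.} depends only on the exponents, not the functions), and that the exponent $\widetilde v(\cdot)=r(\cdot)/(\tfrac1t-\tfrac1s)$ still lies in $LH\cap\mathcal P$ with $\widetilde v_+<\infty$ — which is clear since it is a constant multiple of $r(\cdot)$ — so that Lemma \ref{lem:char. function} applies to it and the factor $|Q|^{1/\widetilde v_Q}$ is comparable to $\|\chi_Q\|_{\widetilde v(\cdot)}$. Care is also needed that $\tfrac1t-\tfrac1s>0$, which holds precisely because $t<s$, so $\widetilde v(\cdot)$ is well-defined; and one should note that as $t\to s$ the exponent $\widetilde v(\cdot)$ blows up, consistent with the fact that the estimate degenerates at the endpoint. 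Everything else is a direct combination of Theorem \ref{thm:RHI}, Lemma \ref{lem:homog}, Lemma \ref{Gen. Holder's ineq.}, and Lemma \ref{lem:char. function}.
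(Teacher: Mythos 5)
The paper does not prove this lemma at all: it is quoted from \cite[Corollary 4.3]{CP1}, and the only accompanying remark is that the finiteness of $[1]_{\mathcal A_{\widetilde v(\cdot)}}$ ``is contained in the proof'' of that source. So there is no internal proof to compare your argument against; I will therefore assess your reconstruction on its own.

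Your interpolation argument is essentially correct. Writing $\frac{1}{tr(\cdot)}=\frac{\theta}{sr(\cdot)}+\frac{1-\theta}{r(\cdot)}$ with $\theta=\frac{s(t-1)}{t(s-1)}\in(0,1)$, factoring $|u\chi_Q|=|u\chi_Q|^{\theta}\,|u\chi_Q|^{1-\theta}$, applying Lemma \ref{Gen. Holder's ineq.} with exponents $sr(\cdot)/\theta$ and $r(\cdot)/(1-\theta)$, and using Lemma \ref{lem:homog} to pull the powers out does give the log-convexity estimate $\|u\chi_Q\|_{tr(\cdot)}\lesssim\|u\chi_Q\|_{sr(\cdot)}^{\theta}\|u\chi_Q\|_{r(\cdot)}^{1-\theta}$; feeding Theorem \ref{thm:RHI} into the first factor then gives the claim.

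However, your explanation of where $[1]_{\mathcal A_{\widetilde v(\cdot)}}$ enters is wrong, and it is worth being precise about it. The splitting $\frac{1}{tr_Q}=\frac{\theta}{sr_Q}+\frac{1-\theta}{r_Q}$ is nothing but linearity of the integral $\frac{1}{|Q|}\int_Q\,\cdot\,dx$ applied to the pointwise identity; it does not use Lemma \ref{lem:char. function}, and no characteristic-function norm appears anywhere in your route. Consequently, in your argument $\widetilde v(\cdot)$ and $[1]_{\mathcal A_{\widetilde v(\cdot)}}$ never occur: your constant depends only on $r(\cdot)$, $s$, $t$ through Theorem \ref{thm:RHI} and the (uniformly bounded) H\"older constant. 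The exponent $\widetilde v(\cdot)=r(\cdot)/(\tfrac1t-\tfrac1s)$ is tailored to a \emph{different} H\"older split, namely the two-factor decomposition $u\chi_Q=(u\chi_Q)\cdot\chi_Q$ with $\frac{1}{tr(\cdot)}=\frac{1}{sr(\cdot)}+\frac{1}{\widetilde v(\cdot)}$. That is the route where one must estimate $\|\chi_Q\|_{\widetilde v(\cdot)}\sim|Q|^{1/\widetilde v_Q}$ by Lemma \ref{lem:char. function}, and it is there (and only there) that $[1]_{\mathcal A_{\widetilde v(\cdot)}}$ genuinely appears; this matches the paper's remark about the source's proof. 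Both routes are legitimate and elementary, but you should not graft the $[1]_{\mathcal A_{\widetilde v(\cdot)}}$ dependence onto a step of your own argument where it plays no role.
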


\begin{remark}
The fact that $[1]_{\mathcal{A}_{\widetilde{v}(\cdot)}}<\infty$ is contained in the proof of \cite[Corollary 4.3]{CP1}.
\end{remark}

To apply the reverse H\"older's inequality we use the following lemma.
\begin{lemma}[\cite{CW2017}, Proposition 5.4]\label{lem:ApqScaling}
Let $\gamma\in [0,1)$ and define $\sigma' = 1/\gamma.$
Let $p(\cdot),q(\cdot)\in\mathcal{P}$ be such that $1/p(\cdot)-1/q(\cdot)=1/\sigma'.$ Then, $w\in\mathcal{A}_{p(\cdot),q(\cdot)}^{\gamma}$ if and only if $w^{\sigma}\in\mathcal{A}_{q(\cdot)/\sigma}$.
\end{lemma}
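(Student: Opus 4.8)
The plan is to reduce the claimed equivalence to a single purely algebraic identity between the two characteristic constants, using only the homogeneity of the variable Lebesgue norm (Lemma~\ref{lem:homog}). Write $\sigma$ for the exponent conjugate to $\sigma'=1/\gamma$, so $1/\sigma+1/\sigma'=1$, i.e.\ $1/\sigma=1-\gamma$ and $\sigma=1/(1-\gamma)$; when $\gamma=0$ this gives $\sigma=1$ and the statement degenerates to the tautology $\mathcal{A}_{p(\cdot),p(\cdot)}^0=\mathcal{A}_{p(\cdot)}$, so one may assume $\gamma\in(0,1)$. First I would check that $q(\cdot)/\sigma$ is an admissible variable exponent: from $1/p(\cdot)-1/q(\cdot)=\gamma$ and $1/p(\cdot)\le 1$ one gets $1/q(\cdot)\le 1-\gamma=1/\sigma$, hence $q(\cdot)\ge\sigma$ pointwise.

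Next I would work out the conjugate exponent of $q(\cdot)/\sigma$. Since $1/(q(\cdot)/\sigma)'=1-\sigma/q(\cdot)$, substituting $1/q(\cdot)=1/p(\cdot)-\gamma$ and using $\sigma(1-\gamma)=1$ gives, after a one-line computation, $(q(\cdot)/\sigma)'=p'(\cdot)/\sigma$. Now I would fix a cube $Q$ and apply Lemma~\ref{lem:homog} with $s=\sigma$ twice: to $f=w\chi_Q$ with base exponent $q(\cdot)/\sigma$, which yields $\|w^{\sigma}\chi_Q\|_{q(\cdot)/\sigma}=\|w\chi_Q\|_{q(\cdot)}^{\sigma}$ (here $\sigma\cdot(q(\cdot)/\sigma)=q(\cdot)$); and to $f=w^{-1}\chi_Q$ with base exponent $(q(\cdot)/\sigma)'=p'(\cdot)/\sigma$, which yields $\|w^{-\sigma}\chi_Q\|_{(q(\cdot)/\sigma)'}=\|w^{-1}\chi_Q\|_{p'(\cdot)}^{\sigma}$. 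Combining these with $|Q|^{-1}=|Q|^{(\gamma-1)\sigma}$ (again because $(1-\gamma)\sigma=1$), and recalling that $\mathcal{A}_{q(\cdot)/\sigma}$ is the $\gamma=0$ instance of Definition~\ref{def:variableApq}, one obtains for every cube
\[
|Q|^{-1}\,\|w^{\sigma}\chi_Q\|_{q(\cdot)/\sigma}\,\|w^{-\sigma}\chi_Q\|_{(q(\cdot)/\sigma)'}
=\Big(|Q|^{\gamma-1}\,\|w\chi_Q\|_{q(\cdot)}\,\|w^{-1}\chi_Q\|_{p'(\cdot)}\Big)^{\sigma},
\]
and taking the supremum over all cubes gives the clean relation $[w^{\sigma}]_{\mathcal{A}_{q(\cdot)/\sigma}}=[w]_{\mathcal{A}_{p(\cdot),q(\cdot)}^{\gamma}}^{\sigma}$. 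The asserted equivalence $w\in\mathcal{A}_{p(\cdot),q(\cdot)}^{\gamma}\iff w^{\sigma}\in\mathcal{A}_{q(\cdot)/\sigma}$ is then immediate, indeed with quantitative control of the constants in both directions.

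I do not expect a genuine obstacle here: once the role of $\sigma=1/(1-\gamma)$ is identified, everything is bookkeeping with exponents. The one point that deserves a word of care is the legitimacy of invoking Lemma~\ref{lem:homog}, which requires the relevant exponents to have finite essential supremum; this is automatic in all our applications, where $p(\cdot),q(\cdot)$ lie in a class $\mathcal{E}_{(r,s)}^0$ with $p_+,q_+<\infty$ and $p_->1$, so that both $q(\cdot)/\sigma$ and $p'(\cdot)/\sigma$ are admissible. Beyond that, one should only make sure that the two $\mathcal{A}$-normalizations are precisely those of Definition~\ref{def:variableApq}, so that the displayed identity matches the definitions verbatim.
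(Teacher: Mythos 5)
The paper does not give a proof of Lemma~\ref{lem:ApqScaling}; it is imported verbatim as Proposition~5.4 from Cruz-Uribe--Wang \cite{CW2017}, so there is no ``paper's own proof'' to compare against. Your argument is correct and essentially the canonical one: the identity $(q(\cdot)/\sigma)'=p'(\cdot)/\sigma$ follows from $1/p(\cdot)-1/q(\cdot)=\gamma$ and $\sigma(1-\gamma)=1$, the homogeneity Lemma~\ref{lem:homog} converts $\|w^{\sigma}\chi_Q\|_{q(\cdot)/\sigma}$ and $\|w^{-\sigma}\chi_Q\|_{p'(\cdot)/\sigma}$ into the $\sigma$-th powers of $\|w\chi_Q\|_{q(\cdot)}$ and $\|w^{-1}\chi_Q\|_{p'(\cdot)}$, and $|Q|^{-1}=|Q|^{(\gamma-1)\sigma}$ accounts for the measure factor, yielding the clean equality $[w^{\sigma}]_{\mathcal{A}_{q(\cdot)/\sigma}}=[w]_{\mathcal{A}^{\gamma}_{p(\cdot),q(\cdot)}}^{\sigma}$ cube-by-cube. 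You are right to flag that Lemma~\ref{lem:homog} as stated assumes a finite upper exponent, so one needs $q_{+}<\infty$ and $p_{-}>1$ (hence $(p')_{+}<\infty$) for the two applications to be legitimate; this is automatic in the ranges $\mathcal{E}^{0}_{(r,s)}$ where the lemma is actually used, though it is a slightly stronger hypothesis than the bare $p(\cdot),q(\cdot)\in\mathcal{P}$ in the lemma's statement.
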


\section{Factorizations for variable exponent Muckenhoupt weights}\label{sect:fact}

The technical core of the extrapolation of compactness results, Theorems \ref{thm:main1} and \ref{thm:main2}, following the approach to similar results first studied in \cite{HL}, consists of suitable factorization results for the relevant weights in each case. In this section, we prove factorization results for the variable exponent Muckenhoupt weights.

\subsection{Full-range factorization}

We begin with the following lemma, addressing what we need in the full-range context. Besides the relevance of this full-range result in its own right, it will also be used in the proof of the limited-range variant further below.

\begin{lemma}\label{lem:main1} Let $\gamma \in [0,1)$ be fixed and 
\begin{itemize}
  \item $\big(p_1(\cdot),q_1(\cdot)\big)\in \mathcal{E}^{\gamma}_{(1,\infty)}$ and  $w_1\in\mathcal{A}_{p_1(\cdot),q_1(\cdot)}^{\gamma},$ 
  \item $\big(p(\cdot),q(\cdot)\big)\in \mathcal{E}^{\gamma}_{(1,\infty)}$ and  $w\in\mathcal{A}_{p(\cdot),q(\cdot)}^{\gamma}.$ 
\end{itemize}
Given any $\theta\in (0,1)$ define the variable exponents $(p_0(\cdot),q_0(\cdot))$ and the weight $w_0$ through
\begin{equation}\label{eq:lem:main1:2}
  \left( \frac{1}{p(\cdot)}, \frac{1}{q(\cdot)} \right) =   \left(\frac{1-\theta}{p_0(\cdot)} + \frac{\theta}{p_1(\cdot)} ,\frac{1-\theta}{q_0(\cdot)} + \frac{\theta}{q_1(\cdot)}  \right),\qquad
  w=w_0^{1-\theta}w_1^{\theta}.
\end{equation}
Then, there exists $\eta\in (0,1)$ so that for all $\theta\in(0,\eta)$
there holds that 
\begin{itemize}
  \item $\big(p_0(\cdot),q_0(\cdot)\big)\in \mathcal{E}^{\gamma}_{(1,\infty)}$ and  $w_0\in\mathcal{A}_{p_0(\cdot),q_0(\cdot)}^{\gamma}.$ 
\end{itemize}
\end{lemma}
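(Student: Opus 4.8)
The plan is to verify the two required properties of $(p_0(\cdot),q_0(\cdot),w_0)$ separately: first that $(p_0(\cdot),q_0(\cdot))\in\mathcal{E}^\gamma_{(1,\infty)}$, and then that $w_0\in\mathcal{A}^\gamma_{p_0(\cdot),q_0(\cdot)}$, each holding for $\theta$ small enough.

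\textbf{Step 1: the exponents.} From \eqref{eq:lem:main1:2} we solve
\begin{equation*}
  \frac{1}{p_0(\cdot)} = \frac{1}{1-\theta}\left(\frac{1}{p(\cdot)} - \frac{\theta}{p_1(\cdot)}\right),\qquad
  \frac{1}{q_0(\cdot)} = \frac{1}{1-\theta}\left(\frac{1}{q(\cdot)} - \frac{\theta}{q_1(\cdot)}\right).
\end{equation*}
Since $1/p(\cdot)$ and $1/q(\cdot)$ lie in $LH$ (being $LH$ functions bounded away from $0$ and $\infty$), and $LH$ is a vector space stable under these affine operations, $1/p_0(\cdot),1/q_0(\cdot)\in LH$, so $p_0(\cdot),q_0(\cdot)\in\mathcal{P}\cap LH$ once we check the pointwise bounds. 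Because $1/p(\cdot)-\gamma=1/q(\cdot)$ and $1/p_1(\cdot)-\gamma=1/q_1(\cdot)$, subtracting gives $1/p_0(\cdot)-1/q_0(\cdot)=\gamma$ identically, so the off-diagonal relation is automatic. It remains to see that $1<(p_0)_-\le (p_0)_+<\infty$ (and the same for $q_0$, which then follows from the off-diagonal relation since $\gamma<1$). As $\theta\to 0^+$, $1/p_0(\cdot)\to 1/p(\cdot)$ uniformly, and $1/p(\cdot)$ takes values in the compact subinterval $[1/p_+,1/p_-]\subset(0,1)$; hence for $\theta$ below some threshold $\eta_1$, $1/p_0(\cdot)$ stays in a compact subinterval of $(0,1)$, giving $1<(p_0)_-\le(p_0)_+<\infty$. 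This is the easy part.

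\textbf{Step 2: the weight.} Here I would use Lemma \ref{lem:ApqScaling}: writing $\sigma'=1/\gamma$ (with the convention that $\gamma=0$ reduces to the ordinary $\mathcal{A}_{p(\cdot)}$ setting, handled by the analogous unweighted statement), membership $w\in\mathcal{A}^\gamma_{p(\cdot),q(\cdot)}$ is equivalent to $w^\sigma\in\mathcal{A}_{q(\cdot)/\sigma}$, and likewise for the index-$1$ and index-$0$ data with the \emph{same} $\sigma$ since all three pairs share the gap $\gamma$. Raising $w=w_0^{1-\theta}w_1^\theta$ to the $\sigma$, the task becomes: given $w^\sigma\in\mathcal{A}_{q(\cdot)/\sigma}$ and $w_1^\sigma\in\mathcal{A}_{q_1(\cdot)/\sigma}$, with $w^\sigma=(w_0^\sigma)^{1-\theta}(w_1^\sigma)^\theta$, show $w_0^\sigma\in\mathcal{A}_{q_0(\cdot)/\sigma}$ for small $\theta$. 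We have $w_0^\sigma=(w^\sigma)^{1/(1-\theta)}(w_1^\sigma)^{-\theta/(1-\theta)}$. The plan is to estimate $\|w_0^\sigma\chi_Q\|_{(q_0/\sigma)(\cdot)}$ and $\|w_0^{-\sigma}\chi_Q\|_{(q_0/\sigma)'(\cdot)}$ on a cube $Q$ via the variable-exponent Hölder inequality (Lemma \ref{Gen. Holder's ineq.}), splitting the exponent $\sigma/q_0(\cdot)=\tfrac{1}{1-\theta}\cdot\tfrac{\sigma}{q(\cdot)}-\tfrac{\theta}{1-\theta}\cdot\tfrac{\sigma}{q_1(\cdot)}$ into a sum of reciprocals, so that powers of $w^\sigma$ (resp. $w^{-\sigma}$) are controlled by the $\mathcal{A}_{q(\cdot)/\sigma}$ characteristic and powers of $w_1^{-\sigma}$ (resp. $w_1^\sigma$) by the $\mathcal{A}_{q_1(\cdot)/\sigma}$ characteristic; this is exactly the point where a reverse Hölder gain is needed to absorb the fact that we are taking powers slightly larger than $1$.

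\textbf{Step 3: where the reverse Hölder enters.} Because $1/(1-\theta)>1$ and $-\theta/(1-\theta)<0$, the naive Hölder split would require $\|(w^\sigma)^{1/(1-\theta)}\chi_Q\|$ and $\|(w_1^{-\sigma})^{\theta/(1-\theta)}\chi_Q\|$-type quantities with exponents \emph{smaller} than the native ones, which is precisely the situation handled by Lemma \ref{lem:RHI}: applying it to $r(\cdot)=q(\cdot)/\sigma$ and $u=w^\sigma$ (and its conjugate form, and to the $w_1$ data), there is $s>1$ such that for all $t\in(1,s)$ the scaled norm over $Q$ is comparable to the native one; choosing $\theta$ small enough that $1/(1-\theta)<s$ makes the reverse-Hölder exponent admissible. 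Combining the Hölder split with these reverse-Hölder comparisons, and keeping track of the $|Q|$-powers so that they match the definition of $[w_0^\sigma]_{\mathcal{A}_{q_0(\cdot)/\sigma}}$, yields a uniform-in-$Q$ bound, hence $w_0\in\mathcal{A}^\gamma_{p_0(\cdot),q_0(\cdot)}$ after undoing the scaling via Lemma \ref{lem:ApqScaling}. Setting $\eta=\min(\eta_1,1-1/s)$ (shrinking further if several applications of Lemma \ref{lem:RHI} produce several thresholds) completes the argument.

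\textbf{Main obstacle.} The delicate point is Step 2–3: organizing the Hölder exponent splitting so that each factor is of a form to which Lemma \ref{lem:RHI} (reverse Hölder) or the $\mathcal{A}$-characteristic of $w$ or $w_1$ applies, and doing the bookkeeping on the powers of $|Q|$ and on the three (possibly distinct) implicit constants so they assemble into a single finite supremum. In particular one must make sure that the threshold $s$ from the reverse Hölder inequality depends only on the fixed data $p(\cdot),q(\cdot),p_1(\cdot),q_1(\cdot),w,w_1$ and not on $\theta$ or $Q$, so that the choice of $\eta$ is legitimate; this is where the precise statement of Lemma \ref{lem:RHI}, including the control of $[1]_{\mathcal{A}_{\widetilde v(\cdot)}}$, is used.
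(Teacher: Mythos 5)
Your Step 1 is essentially the paper's, and your high-level plan for Steps 2--3 (scale via Lemma~\ref{lem:ApqScaling}, split via H\"older, absorb via Lemma~\ref{lem:RHI}) is the right strategy. However, the decomposition you propose at the crux of Step 2 does not close up, and this is where the real work lies.

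You write the identity $\frac{\sigma}{q_0(\cdot)}=\tfrac{1}{1-\theta}\cdot\tfrac{\sigma}{q(\cdot)}-\tfrac{\theta}{1-\theta}\cdot\tfrac{\sigma}{q_1(\cdot)}$ and propose to ``split into a sum of reciprocals'' for H\"older. But this has a \emph{negative} term, so it is not a H\"older decomposition, and there is no way to turn it into one by elementary rearrangement: if you try to set the two factor exponents so that the factor norms become the native ones for $w^\sigma$ (exponent $q(\cdot)/\sigma$) and for $w_1^{-\sigma}$ (exponent $(q_1(\cdot)/\sigma)'=p_1'(\cdot)/\sigma$), the reciprocals add up to $\frac{\sigma}{(1-\theta)q(\cdot)}+\frac{\theta}{1-\theta}\bigl(1-\tfrac{\sigma}{q_1(\cdot)}\bigr)$, which differs from $\sigma/q_0(\cdot)$ by a \emph{negative} constant $-\theta/(1-\theta)$. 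So the naive (two-factor, zero-slack) split is not merely ``delicate bookkeeping''; it is impossible as stated, and there is no reverse-H\"older gain to exploit because the exponents exactly match the native ones.

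The paper's fix is to introduce a strictly positive slack parameter $\iota>0$ and two auxiliary variable exponents $e(\cdot),u(\cdot)$, writing the three-term splits
\begin{equation*}
  \frac{1}{q_0(\cdot)} = \frac{1}{(1+\iota)q(\cdot)} + \frac{\theta}{(1+\iota)p_1'(\cdot)} + \frac{1}{e(\cdot)},\qquad
  \frac{1}{p_0'(\cdot)} = \frac{1}{(1+\iota)p'(\cdot)} + \frac{\theta}{(1+\iota)q_1(\cdot)} + \frac{1}{u(\cdot)},
\end{equation*}
then checking that for $\theta$ small (given $\iota$) one has $e(\cdot),u(\cdot)\in\mathcal{E}^0_{(1,\infty)}$, since $1/e_\theta(\cdot)\to(1-\tfrac{1}{1+\iota})\,q(\cdot)^{-1}>0$ as $\theta\to0^+$. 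Note that the $w_1$-term is expressed through $p_1'(\cdot)$ (the exponent against which $w_1^{-1}$ is tested in $\mathcal{A}^\gamma_{p_1(\cdot),q_1(\cdot)}$), not through a negatively weighted $q_1(\cdot)$. The $(1+\iota)$ inflation is exactly what makes the H\"older exponents larger than the native ones, so that after Lemma~\ref{lem:homog} one is in the situation $t=(1+\iota)/(1-\theta)>1$ where Lemma~\ref{lem:RHI} applies; the constant $\iota$ is then chosen small in terms of the reverse-H\"older thresholds $s_1,s_2$, and the $e(\cdot),u(\cdot)$ norms of $\chi_Q$ contribute the $|Q|$-powers needed to balance the books via \eqref{eq:u+e}. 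Without the slack and the auxiliary exponents the argument does not go through, so you should regard their construction as the main content of the proof rather than bookkeeping.

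One further small remark: you propose to pass to the $\sigma$-scaled picture immediately via Lemma~\ref{lem:ApqScaling}. The paper instead applies H\"older at the unscaled level and uses Lemma~\ref{lem:homog} only when invoking Lemma~\ref{lem:RHI}. Either order can be made to work, but neither circumvents the need for the $\iota$-slack and the auxiliary exponents.
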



\begin{proof}
A fixed choice of $\theta\in(0,1)$ determines the tuple $\big( p_0(\cdot),q_0(\cdot) \big)$ and the weight $ w_0$  through \eqref{eq:lem:main1:2}. 

We first check that $\big( p_0(\cdot),q_0(\cdot) \big) \in \mathcal{E}^{\gamma}_{(1,\infty)},$ provided $\theta < \eta$, and $\eta$ is sufficiently small. We impose that 
\begin{align}\label{eq:lem3:eta1}
  \eta < 
  \min\left( \frac{(p_1)_{-}}{p_+}, 1- \frac{1}{p_{-}} ,  \frac{(q_1)_{-}}{q_+}, 1- \frac{1}{q_{-}} \right).
\end{align}
Notice that $\eta\in (0,1)$ is clear.
From the first two entries in the minimum in \eqref{eq:lem3:eta1}, it follows that
\begin{equation*}
  \frac{1-\theta}{p_0(\cdot)}
  =\frac{1}{p(\cdot)}-\frac{\theta}{p_1(\cdot)}
  \begin{cases}
      \displaystyle\geq\frac{1}{p_+}-\frac{\theta}{(p_1)_-}>\frac{1}{p_+}-\frac{\eta}{(p_1)_-}>0, \\
      \displaystyle\leq\frac{1}{p_-}-\frac{\theta}{(p_1)_+}<\frac{1}{p_-}<1-\eta<1-\theta,
  \end{cases}
\end{equation*}
and hence $(p_0)_{-}, ({p_0})_{+} \in (1,\infty).$
Exactly the same argument (using the two rightmost entries inside the minimum bounding $\eta$) shows that $(q_0)_{-}, ({q_0})_{+} \in (1,\infty).$

Next we check that $p_0(\cdot),q_0(\cdot)\in LH.$  Since $(p_0)_+,(q_0)_+<\infty,$ by \cite[Proposition 2.3]{CF} we know that $p_0(\cdot),q_0(\cdot)\in LH$ if and only if $1/p_0(\cdot),1/q_0(\cdot)\in LH.$ Similarly $p_+,q_+,(p_1)_+,(q_1)_+<\infty$ gives $1/p(\cdot),1/q(\cdot),1/p_1(\cdot),1/q_1(\cdot)\in LH.$ Therefore $1/p_0(\cdot),1/q_0(\cdot)$ (by \eqref{eq:lem:main1:2}) are linear combinations of functions in $LH.$ Since $LH$ is closed under linear combinations, we get $1/p_0(\cdot),1/q_0(\cdot)\in LH.$ Since $(p_0)_+,(q_0)_+<\infty$, this is further equivalent to $p_0(\cdot),q_0(\cdot)\in LH.$ 
To see that $1/p_0(\cdot)-1/q_0(\cdot) = \gamma,$ notice that
\begin{align*}
  \gamma = \frac{1}{p(\cdot)} - \frac{1}{q(\cdot)} = \frac{1-\theta}{p_0(\cdot)} + \frac{\theta}{p_1(\cdot)} - \frac{1-\theta}{q_0(\cdot)} - \frac{\theta}{q_1(\cdot)} = (1-\theta)\left( \frac{1}{p_0(\cdot)}-\frac{1}{q_0(\cdot)}\right) +\theta\gamma.
\end{align*}
Thus we have shown that $\big( p_0(\cdot),q_0(\cdot) \big) \in \mathcal{E}^{\gamma}_{(1,\infty)},$ provided $\theta<\eta$ and \eqref{eq:lem3:eta1} holds. 

It remains to show that, under a further restriction of $\eta$, there holds that $w_0\in \mathcal{A}_{p_0(\cdot),q_0(\cdot)}^{\gamma}$ for all $\theta<\eta.$ Fix a cube $Q$ and after solving $w_0= w^{1/(1-\theta)}w_1^{-\theta/(1-\theta)}$ (by \eqref{eq:lem:main1:2}) write 
\begin{equation*}
\begin{split}
  &\|w_0\chi_Q\|_{q_0(\cdot)}\|w_0^{-1}\chi_{Q}\|_{p_{0}'(\cdot)}
  =\|w^{\frac{1}{1-\theta}}w_1^{-\frac{\theta}{1-\theta}}\chi_Q\|_{q_0(\cdot)}\|w^{-\frac{1}{1-\theta}}w_1^{\frac{\theta}{1-\theta}}\chi_Q\|_{p_{0}'(\cdot)}.
\end{split}
\end{equation*}
The idea is to use H\"older's inequality appropriately in preparation for a later use of the reverse H\"older's inequality. 
Let $\iota>0$ be a positive constant (to be later fixed as a function of the reverse H\"older's inequality) and define $e(\cdot) = e_{\theta}(\cdot)$ and $u(\cdot)=u_{\theta}(\cdot)$ through 
\begin{align}\label{eq:rel:e}
  \frac{1}{q_0(\cdot)} = \frac{1}{ (1+\iota)q(\cdot)} + \frac{\theta}{(1+\iota)p'_1(\cdot)} + \frac{1}{e(\cdot)},
\end{align}
\begin{align}\label{eq:rel:u}
  \frac{1}{p_0'(\cdot)} = \frac{1}{(1+\iota)p'(\cdot)} + \frac{\theta}{(1+\iota)q_1(\cdot)} + \frac{1}{u(\cdot)}.
\end{align}
We next check that if $\theta$ is sufficiently small (for a given fixed $\iota$), then $e_{\theta}(\cdot),u_{\theta}(\cdot)\in\mathcal{E}_{(1,\infty)}^0.$
Solving for $1/q_0(\cdot)$ from \eqref{eq:lem:main1:2} and using \eqref{eq:rel:e} we find that 
\begin{align*}
  \frac{1}{(1-\theta)q(\cdot)} - \frac{\theta}{(1-\theta)q_1(\cdot)} = \frac{1}{ (1+\iota)q(\cdot)} + \frac{\theta}{(1+\iota)p'_1(\cdot)} + \frac{1}{e_{\theta}(\cdot)} ,
\end{align*}
and rearranging gives
\begin{align}\label{eq:lem3:limit0}
  \frac{1}{e_{\theta}(\cdot)} = \left( \frac{1}{1-\theta} -\frac{1}{ 1+\iota} \right) \frac{1}{q(\cdot)} - \theta\left(  \frac{1}{(1-\theta)q_1(\cdot)} + \frac{1}{(1+\iota)p'_1(\cdot)}\right),
\end{align}
which shows that 
\begin{align}\label{eq:lem3:limit1}
  \lim_{\theta\to 0^{+}} \frac{1}{e_{\theta}(\cdot)} = \left( 1 -\frac{1}{ 1+\iota} \right) \frac{1}{q(\cdot)}.
\end{align}
Since $\iota>0,$ it is immediate by \eqref{eq:lem3:limit1} that we may choose $\theta$ so small that $e = e_{\theta}(\cdot)\in\mathcal{P}$ with $e_{-}>1$. Clearly also $1/e(\cdot)\in LH$ as a linear combination of elements of $LH$ (this follows by \cite[Proposition 2.3]{CF} and the fact that $p'_1(\cdot)\in LH$ since $p_1(\cdot)\in LH$ and $(p_{1})_{-}>1$). Thus, by $e_+ < \infty$ (clear from \eqref{eq:lem3:limit0}) and \cite[Proposition 2.3]{CF}, it follows that $e(\cdot)\in\mathcal{E}_{(1,\infty)}^0.$
A similar analysis shows that $u(\cdot) = u_{\theta}(\cdot)\in\mathcal{E}_{(1,\infty)}^0,$ provided $\theta$ is sufficiently small. Since $e(\cdot),u(\cdot)\in\mathcal{E}_{(1,\infty)}^0$, we have 
\begin{equation}\label{eq:chiQnorm}
  \|\chi_Q\|_{e(\cdot)} \sim |Q|^{1/e_Q}\qquad\text{and}\qquad\|\chi_Q\|_{u(\cdot)} \sim |Q|^{1/u_Q},
\end{equation}
by Lemma \ref{lem:char. function}.

Now H\"older's inequality (Lemma \ref{Gen. Holder's ineq.}) applied with the variable exponents \eqref{eq:rel:e} shows that 
\begin{equation}\label{eq:befRHIa}
            \|w^{\frac{1}{1-\theta}}w_1^{-\frac{\theta}{1-\theta}}\chi_Q\|_{q_0(\cdot)} \lesssim  \|w^{\frac{1}{1-\theta}}\chi_Q\|_{(1+\iota)q(\cdot)}\|w_1^{-\frac{\theta}{1-\theta}}\chi_Q\|_{\big(\frac{1+\iota}{\theta}\big)p'_1(\cdot)}\|\chi_Q\|_{e(\cdot)}  
\end{equation}
and with the variable exponents \eqref{eq:rel:u} that
\begin{equation}\label{eq:befRHIb}
         \|w^{-\frac{1}{1-\theta}}w_1^{\frac{\theta}{1-\theta}}\chi_Q\|_{p_{0}'(\cdot)} \lesssim \|w^{-\frac{1}{1-\theta}}\chi_Q\|_{(1+\iota)p'(\cdot)}\|w_1^{\frac{\theta}{1-\theta}}\chi_Q\|_{\big(\frac{1+\iota}{\theta}\big)q_1(\cdot)}\|\chi_Q\|_{u(\cdot)} 
\end{equation}

By summing equations \eqref{eq:rel:e} and \eqref{eq:rel:u}, we obtain
\begin{align*}
  1-\gamma  &= \frac{1}{q_0(\cdot)} +  \frac{1}{p_0'(\cdot)} = \frac{1}{1+\iota}\left(\frac{1}{q(\cdot)} + \frac{1}{p'(\cdot)}\right) +  \frac{\theta}{1+\iota}\left( \frac{1}{p_1'(\cdot)} + \frac{1}{q_1(\cdot)}\right) + \frac{1}{e(\cdot)} + \frac{1}{u(\cdot)} \\ 
  &=\frac{1-\gamma}{1+\iota}+  \theta\frac{1-\gamma}{1+\iota} + \frac{1}{e(\cdot)} + \frac{1}{u(\cdot)}.
\end{align*}
After rearranging, this gives 
\begin{align*}
  \frac{1}{e(\cdot)} + \frac{1}{u(\cdot)} = (1-\gamma)\left( 1 - \left( \frac{1}{1+\iota} + \frac{\theta}{1+\iota}\right) \right)  = (1-\gamma)\left( 1 - \frac{1+\theta}{1+\iota} \right),
\end{align*}
and in particular, for all cubes $Q$, there holds
\begin{align}\label{eq:u+e}
  \frac{1}{e_Q} + \frac{1}{u_Q} = (1-\gamma)\left( 1 - \frac{1+\theta}{1+\iota} \right).
\end{align}
Multiplying the bounds \eqref{eq:befRHIa} and \eqref{eq:befRHIb} together, using \eqref{eq:chiQnorm} and \eqref{eq:u+e} and rearranging, we obtain 
\begin{equation}\label{eq:lem3:Y}
    \begin{split}
         &|Q|^{\gamma-1}\|w_0\chi_Q\|_{q_0(\cdot)}\|w_0^{-1}\chi_{Q}\|_{p_{0}'(\cdot)} \\
    & \lesssim |Q|^{-(1+\theta)\frac{1-\gamma}{1+\iota}} \left[ \|w^{\frac{1}{1-\theta}}\chi_Q\|_{(1+\iota)q(\cdot)}  \|w^{-\frac{1}{1-\theta}}\chi_Q\|_{(1+\iota)p'(\cdot)} \right] \\
    &\qquad\qquad\qquad\qquad \times\left[ \|w_1^{\frac{\theta}{1-\theta}}\chi_Q\|_{\big(\frac{1+\iota}{\theta}\big)q_1(\cdot)}\|w_1^{-\frac{\theta}{1-\theta}}\chi_Q\|_{\big(\frac{1+\iota}{\theta}\big)p'_1(\cdot)}  \right]
    \end{split}
\end{equation}
We consider the square bracketed terms separately. 
By Lemma \ref{lem:homog} we write
\begin{align}\label{eq:lem3:X}
  \|w^{\frac{1}{1-\theta}}\chi_Q\|_{(1+\iota)q(\cdot)}  \|w^{-\frac{1}{1-\theta}}\chi_Q\|_{(1+\iota)p'(\cdot)}  = \|w^{\sigma}\chi_Q\|^{\frac{1}{\sigma(1-\theta)}}_{\frac{1+\iota}{1-\theta}\frac{q(\cdot)}{\sigma}}  \|w^{-\sigma}\chi_Q\|^{\frac{1}{\sigma(1-\theta)}}_{\frac{1+\iota}{1-\theta}\frac{p'(\cdot)}{\sigma}}.
\end{align}
Recall that $w\in A_{p(\cdot),q(\cdot)}$ if and only if $w^{-1}\in A_{q'(\cdot),p'(\cdot)}$ (immediately from the definition).
Now Lemma \ref{lem:ApqScaling} applied with $\sigma = \frac{1}{1-\gamma},$ 
shows that $w^{\sigma} \in \mathcal{A}_{q(\cdot)/\sigma}$ and $w^{-\sigma} \in \mathcal{A}_{p'(\cdot)/\sigma}$. Moreover, notice that $q(\cdot)/\sigma, p'(\cdot)/\sigma\in\mathcal{P}\cap LH$  with $(q(\cdot)/\sigma)_{+},(p'(\cdot)/\sigma)_{+}<\infty$. Therefore the reverse H\"older's inequality, Lemma \ref{lem:RHI}, tells us that there exist some $s_1,s_2>1$ such that 
\begin{align}\label{eq:lem3:XX}
  \|w^{\sigma}\chi_Q\|_{t\frac{q(\cdot)}{\sigma}} \lesssim |Q|^{(\frac1t-1) \sigma/q_Q}  \|w^{\sigma}\chi_Q\|_{\frac{q(\cdot)}{\sigma}},\quad 1<t<s_1,
\end{align}
\begin{align}\label{eq:lem3:XXX}
  \|w^{-\sigma}\chi_Q\|_{t\frac{p'(\cdot)}{\sigma}} \lesssim |Q|^{(\frac1t-1) \sigma/p'_Q} \|w^{-\sigma}\chi_Q\|_{\frac{p'(\cdot)}{\sigma}},\quad 1<t<s_2.
\end{align}
Now we demand from $\theta,\iota$ that
\begin{equation}\label{eq:tChoice}
  1< t:= t(\theta,\iota) := (1+\iota)/(1-\theta) < \min(s_1,s_2).
\end{equation}  
Applying Lemma \ref{lem:homog} and estimates \eqref{eq:lem3:XX} and \eqref{eq:lem3:XXX}, we obtain
\begin{align*}
  \mathrm{RHS}\eqref{eq:lem3:X}^{\sigma(1-\theta)} &= \|w^{\sigma}\chi_Q\|_{t \frac{q(\cdot)}{\sigma}}  \|w^{-\sigma}\chi_Q\|_{t \frac{p'(\cdot)}{\sigma}} \\ 
  &\lesssim |Q|^{ \sigma(\frac1t-1)\left( 1/q_Q + 1/p'_Q\right)}  \|w^{\sigma}\chi_Q\|_{\frac{q(\cdot)}{\sigma}}\|w^{-\sigma}\chi_Q\|_{\frac{p'(\cdot)}{\sigma}} \\ 
  &= |Q|^{ \sigma(\frac1t-1)(1/q_Q+1/p'_Q)}  \|w\chi_Q\|_{q(\cdot)}^{\sigma}\|w^{-1}\chi_Q\|_{p'(\cdot)}^{\sigma} \\
  &\leq |Q|^{\sigma(\frac1t-1)(1-\gamma)}  |Q|^{\sigma(1-\gamma)} [w]_{\mathcal{A}_{p(\cdot),q(\cdot)}^{\gamma}}^{\sigma}  \\
  &= |Q|^{\sigma\frac1t(1-\gamma)}[w]_{\mathcal{A}_{p(\cdot),q(\cdot)}^{\gamma}}^{\sigma}
  = |Q|^{\sigma\frac{1-\theta}{1+\iota}(1-\gamma)}[w]_{\mathcal{A}_{p(\cdot),q(\cdot)}^{\gamma}}^{\sigma},
\end{align*}
where we substituted the value of $t$ from \eqref{eq:tChoice} in the last step. Taking roots of both sides, we have
\begin{equation}\label{eq:RHS<Ap}
  \mathrm{RHS}\eqref{eq:lem3:X} \lesssim 
  |Q|^{\frac{1-\gamma}{1+\iota}}[w]_{\mathcal{A}_{p(\cdot),q(\cdot)}^{\gamma}}^{\frac{1}{1-\theta}}.
 \end{equation}
 
Then, we consider the second bracketed term of \eqref{eq:lem3:Y}. By Lemma \ref{lem:homog}, we write it as 
\begin{align}\label{eq:lem3:Z}
  \|w_1^{\frac{\theta}{1-\theta}}\chi_Q\|_{\big(\frac{1+\iota}{\theta}\big)q_1(\cdot)}\|w_1^{-\frac{\theta}{1-\theta}}\chi_Q\|_{\big(\frac{1+\iota}{\theta}\big)p'_1(\cdot)} =  \left(\|w_1^{\sigma}\chi_Q\|_{\big(\frac{1+\iota}{1-\theta}\big)\frac{q_1(\cdot)}{\sigma}}^{\frac{1}{\sigma(1-\theta)}}\|w_1^{-\sigma}\chi_Q\|_{\big(\frac{1+\iota}{1-\theta}\big)\frac{p'_1(\cdot)}{\sigma}}^{\frac{1}{\sigma(1-\theta)}}\right)^{\theta}.
\end{align}
Notice that $\mathrm{RHS}\eqref{eq:lem3:Z}$ is formally of exactly the same form as $\mathrm{RHS}\eqref{eq:lem3:X}^{\theta}$ but with the variable exponents $q_1(\cdot),p_1'(\cdot)$ in place of $q(\cdot),p'(\cdot).$ Therefore, exactly the same analysis as above yields, for an appropriate choice of $t(\theta,\iota)$ small enough, that
\begin{equation}\label{eq:RHS<ApTheta}
  \mathrm{RHS}\eqref{eq:lem3:Z} \lesssim \left( |Q|^{\frac{1-\gamma}{1+\iota}}[w_1]_{\mathcal{A}_{p_1(\cdot),q_1(\cdot)}^{\gamma}}^{\frac{1}{1-\theta}} \right)^{\theta}.
\end{equation}
Chaining all of the above estimates together, the proof is concluded by
\begin{align*}
  &|Q|^{\gamma-1}\|w_0\chi_Q\|_{q_0(\cdot)}\|w_0^{-1}\chi_{Q}\|_{p_{0}'(\cdot)} \\ 
  &\lesssim |Q|^{-(1+\theta)\frac{1-\gamma}{1+\iota}} \left[ \|w^{\frac{1}{1-\theta}}\chi_Q\|_{(1+\iota)q(\cdot)}  \|w^{-\frac{1}{1-\theta}}\chi_Q\|_{(1+\iota)p'(\cdot)} \right] \\
  &\qquad\qquad \times\left[ \|w_1^{\frac{\theta}{1-\theta}}\chi_Q\|_{\big(\frac{1+\iota}{\theta}\big)q_1(\cdot)}\|w_1^{-\frac{\theta}{1-\theta}}\chi_Q\|_{\big(\frac{1+\iota}{\theta}\big)p'_1(\cdot)}  \right]\qquad\text{by \eqref{eq:lem3:Y}} \\
  &\lesssim  |Q|^{-(1+\theta)\frac{1-\gamma}{1+\iota}}|Q|^{\frac{1-\gamma}{1+\iota}}|Q|^{\theta\frac{1-\gamma}{1+\iota}}[w]_{\mathcal{A}_{p(\cdot),q(\cdot)}^{\gamma}}^{\frac{1}{1-\theta}}[w_1]_{\mathcal{A}_{p_1(\cdot),q_1(\cdot)}^{\gamma}}^{\frac{\theta}{1-\theta}}
  \qquad\text{by \eqref{eq:RHS<Ap} and \eqref{eq:RHS<ApTheta}} \\
  &= [w]_{\mathcal{A}_{p(\cdot),q(\cdot)}^{\gamma}}^{\frac{1}{1-\theta}}[w_1]_{\mathcal{A}_{p_1(\cdot),q_1(\cdot)}^{\gamma}}^{\frac{\theta}{1-\theta}}.\qedhere
\end{align*}
\end{proof}


\subsection{Limited-range factorization}

We turn to the off-diagonal limited range context. In particular, we prove the following key lemma which is essential for the proof of Theorem  \ref{thm:main2}.

\begin{lemma}\label{lem:main2} Let $r_i,s_i\in [1,\infty]$ satisfy $r_i<s_i$ for $i=1,2$ and $\gamma\in [0,1)$ be fixed. Let
\begin{itemize}
  \item $\big(p_1(\cdot),q_1(\cdot)\big) \in \vec{\mathcal{E}}_{(\vec{r},\vec{s})}^{\gamma}$ and $w_1\in\mathcal{A}_{(p_1(\cdot),q_1(\cdot)), (\vec{r},\vec{s})},$
  \item $\big(p(\cdot),q(\cdot)\big) \in \vec{\mathcal{E}}_{(\vec{r},\vec{s})}^{\gamma}$ and $w\in\mathcal{A}_{(p(\cdot),q(\cdot)), (\vec{r},\vec{s})}.$
\end{itemize}
Given a fixed $\theta\in (0,1)$ define the variable exponents $(p_0(\cdot),q_0(\cdot))$ and the weight $w_0$ through
\begin{equation*}
  \left( \frac{1}{p(\cdot)}, \frac{1}{q(\cdot)} \right) =   \left(\frac{1-\theta}{p_0(\cdot)} + \frac{\theta}{p_1(\cdot)} ,\frac{1-\theta}{q_0(\cdot)} + \frac{\theta}{q_1(\cdot)}  \right),\qquad
  w=w_0^{1-\theta}w_1^{\theta}.
\end{equation*}
Then, there exists $\eta\in (0,1)$ so that for all $\theta\in (0,\eta)$ there holds that
\begin{itemize}
  \item $\big(p_0(\cdot),q_0(\cdot)\big)\in \vec{\mathcal{E}}_{(\vec{r},\vec{s})}^{\gamma}$ and $w_0\in \mathcal{A}_{(p_0(\cdot),q_0(\cdot)),(\vec{r},\vec{s})}.$
\end{itemize}
\end{lemma}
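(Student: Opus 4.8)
The plan is to reduce Lemma~\ref{lem:main2} to the full-range Lemma~\ref{lem:main1} by an appropriate rescaling of the weights and exponents, exactly as the limited range weight class is obtained from the full range one in the discussion following Theorem~\ref{thm:main2}. Concretely, set $\sigma'=1/\gamma$ (with the usual convention $\sigma=1$, i.e., no rescaling, when $\gamma=0$), and introduce new ``full-range'' exponents $\widetilde p(\cdot),\widetilde q(\cdot)$ via
\begin{equation*}
  \frac{1}{\widetilde p(\cdot)} = \frac{1}{r_1}-\frac{1}{p(\cdot)},\qquad \frac{1}{\widetilde q(\cdot)} = \frac{1}{q(\cdot)}-\frac{1}{s_2},
\end{equation*}
together with the rescaled weight $\widetilde w = w$ (or rather $w$ raised to an appropriate power, matching the normalization in Lemma~\ref{lem:ApqScaling}); similarly for the index $1$ and for $0$. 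The point of Remark~\ref{rem:Eempty} is that $1/\widetilde p(\cdot)-1/\widetilde q(\cdot)$ then equals a \emph{constant} $\widetilde\gamma\in[0,1)$ (namely $1-(1/r_1-1/s_1)-\gamma$ after the bookkeeping, using $1/r_1-1/r_2=1/s_1-1/s_2=\gamma$), and that the limited-range Muckenhoupt condition $[w]_{(p(\cdot),q(\cdot)),(\vec r,\vec s)}<\infty$ is precisely a full-range condition $[\widetilde w]_{\mathcal{A}^{\widetilde\gamma}_{\widetilde p(\cdot),\widetilde q(\cdot)}}<\infty$ for the new data. This is the limited-range analogue of the computation in Lemma~\ref{lem:ApqScaling}; one checks it directly from the two definitions of the weight classes.

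The first step is therefore to record these ``dictionary'' identities carefully: (i) $(p(\cdot),q(\cdot))\in\vec{\mathcal E}^\gamma_{(\vec r,\vec s)}$ translates into $(\widetilde p(\cdot),\widetilde q(\cdot))\in\mathcal E^{\widetilde\gamma}_{(1,\infty)}$ — here one needs that the $LH$ and $p_+<\infty$ conditions are preserved, which follows because $1/\widetilde p(\cdot)$ is an affine function of $1/p(\cdot)$ with constant coefficients and $LH$ is closed under such operations (as used repeatedly in the proof of Lemma~\ref{lem:main1}), and because $r_1<p_-\le p_+<s_1$ forces $1<\widetilde p_-\le\widetilde p_+<\infty$; (ii) $w\in\mathcal A_{(p(\cdot),q(\cdot)),(\vec r,\vec s)}$ is equivalent to $\widetilde w\in\mathcal A^{\widetilde\gamma}_{\widetilde p(\cdot),\widetilde q(\cdot)}$; and (iii) the interpolation relations among $(p,q)$, $(p_1,q_1)$, $(p_0,q_0)$ and the factorization $w=w_0^{1-\theta}w_1^\theta$ are transported verbatim to the tilde quantities, because the map $1/p(\cdot)\mapsto 1/\widetilde p(\cdot)$ is affine and hence respects convex combinations, and the weight relation is unchanged (or is preserved under the fixed power). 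Once this dictionary is in place, the second step is simply to invoke Lemma~\ref{lem:main1} with data $(\widetilde p(\cdot),\widetilde q(\cdot))$, $(\widetilde p_1(\cdot),\widetilde q_1(\cdot))$, $\widetilde w$, $\widetilde w_1$ and parameter $\widetilde\gamma$: it produces an $\eta\in(0,1)$ such that for all $\theta\in(0,\eta)$ one has $(\widetilde p_0(\cdot),\widetilde q_0(\cdot))\in\mathcal E^{\widetilde\gamma}_{(1,\infty)}$ and $\widetilde w_0\in\mathcal A^{\widetilde\gamma}_{\widetilde p_0(\cdot),\widetilde q_0(\cdot)}$. The third and final step is to run the dictionary backwards: these two conclusions are exactly $(p_0(\cdot),q_0(\cdot))\in\vec{\mathcal E}^\gamma_{(\vec r,\vec s)}$ and $w_0\in\mathcal A_{(p_0(\cdot),q_0(\cdot)),(\vec r,\vec s)}$, which is what is claimed.

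The main obstacle I anticipate is not any single hard estimate — the analytic heart (H\"older plus reverse H\"older) has already been done in Lemma~\ref{lem:main1} — but rather getting the bookkeeping of the four exponents $r_1,s_1,r_2,s_2$ and the two powers (the $1/\widetilde p$-shift and the weight-rescaling exponent) mutually consistent, in particular verifying that $\widetilde\gamma$ really is a constant in $[0,1)$ and that the edge cases $r_i=1$, $s_i=\infty$, or $\gamma=0$ don't degenerate the rescaling. Here one leans on Remark~\ref{rem:Eempty} (which guarantees $1/r_1-1/r_2=1/s_1-1/s_2=\gamma$, making $\widetilde\gamma$ well defined and independent of the cube) and on the identity $[w]_{\mathcal A^\gamma_{p(\cdot),q(\cdot)}}=[w]_{(p(\cdot),q(\cdot)),(\vec r,\vec s)}$ with the specific choice $r_1=1$, $s_1=1/\gamma$, $r_2=1/(1-\gamma)$, $s_2=\infty$ noted after Theorem~\ref{thm:main2}, which confirms the dictionary is the correct one in the full-range special case. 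An alternative, if one prefers not to route through Lemma~\ref{lem:main1}, is to repeat its proof \emph{mutatis mutandis}: define $p_0(\cdot),q_0(\cdot),w_0$ by the stated interpolation, constrain $\eta$ by the limited-range analogue of \eqref{eq:lem3:eta1} to keep $r_i<(p_0)_-\le(p_0)_+<s_i$, transport $LH$ as before, split the product $\|w_0\chi_Q\|_{1/(1/q_0(\cdot)-1/s_2)}\|w_0^{-1}\chi_Q\|_{1/(1/r_1-1/p_0(\cdot))}$ by generalized H\"older into a $w$-part, a $w_1$-part and two ``junk'' exponents $e(\cdot),u(\cdot)$ with $1/e_Q+1/u_Q$ a fixed constant (the analogue of \eqref{eq:u+e}), then apply Lemma~\ref{lem:ApqScaling}/\ref{lem:RHI} after rescaling by $\sigma$; the computation is structurally identical and I would only spell out the points where $s_2=\infty$ or $r_1=1$ changes a norm into an $L^\infty$ or $L^1$ norm. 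Either route reaches the conclusion; the rescaling route is shorter and is the one I would present.
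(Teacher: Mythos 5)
Your high-level strategy is the same as the paper's: reduce the limited-range case to Lemma~\ref{lem:main1} by an affine substitution in $1/p(\cdot)$ and $1/q(\cdot)$ that turns the $\mathcal{A}_{(p(\cdot),q(\cdot)),(\vec r,\vec s)}$ condition into a full-range $\mathcal{A}^{\widetilde\gamma}_{\widetilde p(\cdot),\widetilde q(\cdot)}$ condition, apply Lemma~\ref{lem:main1}, and transport back. (The paper uses Nieraeth's normalized maps $\phi_{r,s}(t)=(t-1/s)/(1/r-1/s)$ together with the weight rescaling $w_{r,s}=w^{1/(1/r-1/s)}$, which reduces to the \emph{diagonal} case $\widetilde\gamma=0$; your unnormalized substitution avoids rescaling the weight but keeps an off-diagonal $\widetilde\gamma$. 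Both routes are acceptable.)

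However, the dictionary as you wrote it is wrong, and the error is not cosmetic: with
\begin{equation*}
  \frac{1}{\widetilde p(\cdot)}=\frac{1}{r_1}-\frac{1}{p(\cdot)},\qquad \frac{1}{\widetilde q(\cdot)}=\frac{1}{q(\cdot)}-\frac{1}{s_2},
\end{equation*}
one gets $\frac{1}{\widetilde p(\cdot)}-\frac{1}{\widetilde q(\cdot)}=\frac{1}{r_1}+\frac{1}{s_2}-\bigl(\frac{1}{p(\cdot)}+\frac{1}{q(\cdot)}\bigr)$, which depends on the \emph{sum} $1/p(\cdot)+1/q(\cdot)$. The hypotheses only fix the \emph{difference} $1/p(\cdot)-1/q(\cdot)=\gamma$, so this quantity is genuinely non-constant, contradicting your claim that $\widetilde\gamma$ is a constant, and your proposed value $\widetilde\gamma=1-(1/r_1-1/s_1)-\gamma$ is also incorrect. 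The fix is that the expression $1/r_1-1/p(\cdot)$ appearing in Definition~\ref{variableAp,r,s} is the exponent for $\|w^{-1}\chi_Q\|$, so it should be matched with $1/\widetilde p'(\cdot)$, not $1/\widetilde p(\cdot)$; i.e.\ one should set
\begin{equation*}
  \frac{1}{\widetilde p(\cdot)}=1-\frac{1}{r_1}+\frac{1}{p(\cdot)},\qquad \frac{1}{\widetilde q(\cdot)}=\frac{1}{q(\cdot)}-\frac{1}{s_2},\qquad \widetilde w=w.
\end{equation*}
Then $\frac{1}{\widetilde p(\cdot)}-\frac{1}{\widetilde q(\cdot)}=1-\frac{1}{r_1}+\frac{1}{s_2}+\gamma=1-\bigl(\frac{1}{r_1}-\frac{1}{s_1}\bigr)=:\widetilde\gamma\in[0,1)$ is indeed constant, the identity $[\widetilde w]_{\mathcal{A}^{\widetilde\gamma}_{\widetilde p(\cdot),\widetilde q(\cdot)}}=[w]_{(p(\cdot),q(\cdot)),(\vec r,\vec s)}$ holds verbatim (the factor $|Q|^{\widetilde\gamma-1}=|Q|^{-(1/r_1-1/s_1)}$ matches), the transformation is affine so it respects the convex-combination relations and the $LH$ class, and the bounds $r_1<p_-\le p_+<s_1$, $r_2<q_-\le q_+<s_2$ translate into $1<\widetilde p_-\le\widetilde p_+<\infty$, $1<\widetilde q_-\le\widetilde q_+<\infty$. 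With these corrections the reduction to Lemma~\ref{lem:main1} goes through as you describe; the remaining details (backwards transport of $(p_0(\cdot),q_0(\cdot))$ and $w_0$) are routine.
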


\begin{proof}
The strategy of the proof is to apply a transformation to reduce the situation at hand to the one that was already covered in Lemma \ref{lem:main1}, then apply that lemma, and finally transform back.

Turning to the details, we consider the following maps introduced by Nieraeth \cite{N2023}:
\begin{align*}
  \phi_{r,s}(t) := \frac{t-1/s}{1/r-1/s},\qquad w_{r,s} := w^{1/(1/r-1/s)}
\end{align*}
and define new variable exponents through
\begin{align*}
  \frac{1}{p_{r,s}(\cdot)} := \phi_{r,s}\left(\frac{1}{p(\cdot)}\right),\qquad \frac{1}{p_{r,s}'(\cdot)} = 1- \frac{1}{p_{r,s}(\cdot)}.
\end{align*}
Our standing assumptions state that
\begin{align}\label{eq:lem:main21}
  1/r_2-1/r_1 = 1/s_2 - 1/s_1 = 1/q_1(\cdot) - 1/p_1(\cdot) = \gamma.
\end{align}
Notice that $1/r_1 - 1/s_1 = 1/r_2 - 1/s_2$ by the left-most identity of \eqref{eq:lem:main21}.
Then,
\begin{align*}
  \frac{1}{(p_1)_{r_1,s_1}(\cdot)} -  \frac{1}{(q_1)_{r_2,s_2}(\cdot)} = 
  \frac{1}{1/r_1-1/s_1}\left( \frac{1}{s_2}- \frac{1}{s_1} - \Big(\frac{1}{q_1(\cdot)} - \frac{1}{p_1(\cdot)}\Big)\right)  =  \frac{\gamma-\gamma}{1/r_1-1/s_1}=0,
\end{align*} 
showing that (the latter identity is checked similarly)
\begin{equation}\label{eq:lem:main2:6}
       \begin{split}
            (p_1)_{r_1,s_1}(\cdot) = (q_1)_{r_2,s_2}(\cdot),\qquad
            p_{r_1,s_1}(\cdot) = q_{r_2,s_2}(\cdot).
       \end{split}
   \end{equation}
Then, a computation, using Lemma \ref{lem:homog}, shows that
\begin{equation}\label{eq:lem:main2:5}
    \begin{split}
          &\Big( \|w_1\chi_Q\|_{\frac{1}{\frac{1}{q_1(\cdot)}-\frac{1}{s_2}}}\|w_1^{-1}\chi_Q\|_{\frac{1}{\frac{1}{r_1}-\frac{1}{p_1(\cdot)}}}  \Big)^{\frac{1}{1/r_1-1/s_1}} \\
    &= 
    \|(w_1)_{r_2,s_2}\chi_Q\|_{\frac{\frac{1}{r_2}-\frac{1}{s_2}}{\frac{1}{q_1(\cdot)}-\frac{1}{s_2}}}\|(w_1)_{r_1,s_1}^{-1}\chi_Q\|_{\frac{\frac{1}{r_1}-\frac{1}{s_1}}{\frac{1}{r_1}-\frac{1}{p_1(\cdot)}}} \\
    &= 
     \|(w_1)_{r_2,s_2}\chi_Q\|_{(q_1)_{r_2,s_2}(\cdot)}\|(w_1)_{r_1,s_1}^{-1}\chi_Q\|_{ (p_1)_{r_1,s_1}'(\cdot)} \\ 
     &= 
   \|(w_1)_{r_1,s_1}\chi_Q\|_{(p_1)_{r_1,s_1}(\cdot)}\|(w_1)_{r_1,s_1}^{-1}\chi_Q\|_{ (p_1)_{r_1,s_1}'(\cdot)},
    \end{split}
\end{equation}
where in the last equation we used $(p_1)_{r_1,s_1}(\cdot) = (q_1)_{r_2,s_2}(\cdot)$ and $(w_1)_{r_2,s_2} = (w_1)_{r_1,s_1}.$
Thus
\begin{align*}
  [(w_1)_{r_1,s_1}]_{\mathcal{A}_{(p_1)_{r_1,s_1}(\cdot)}}^{1/r_1-1/s_1} = [w_1]_{_{(p_1(\cdot),q_1(\cdot)), (\vec{r},\vec{s})}}< \infty.
\end{align*}
Similarly for the weight $w,$ we have 
\begin{align*}
  [w_{r_1,s_1}]_{\mathcal{A}_{p_{r_1,s_1}(\cdot)}}^{1/r_1-1/s_1} = [w]_{_{(p(\cdot),q(\cdot)), (\vec{r},\vec{s})}} < \infty.
\end{align*}

To apply Lemma \ref{lem:main1} with the weights $(w_1)_{r_1,s_1},w_{r_1,s_1}$,  notice that 
$(p_1)_{r_1,s_1}(\cdot),p_{r_1,s_1}(\cdot) \in \mathcal{E}^{0}_{(1,\infty)}.$
Indeed this follows as $\phi_{r_1,s_1}:[1/s_1,1/r_1]\to[0,1]$ continuously and $p_1(\cdot),p(\cdot)\in\mathcal{E}^{0}_{(r_1,s_1)}.$
Therefore, by Lemma \ref{lem:main1}, there exists some $\eta=\eta\left(p_{r_1,s_1}(\cdot), (p_1)_{r_1,s_1}(\cdot), w_{r_1,s_1}, (w_1)_{r_1,s_1}\right)\in (0,1)$ such that the following holds:
whenever $\theta\in (0,\eta),$ and $\mu=\mu_\theta$ and $e_0(\cdot)=e_\theta(\cdot)$ are defined by
\begin{equation}\label{eq:e0mu}
  \frac{1}{p_{r_1,s_1}(\cdot)}=\frac{1-\theta}{e_0(\cdot)}+\frac{\theta}{(p_1)_{r_1,s_1}(\cdot)},\qquad  w_{r_1,s_1}=(\mu)^{1-\theta}(w_1)_{r_1,s_1}^{\theta},
\end{equation}
then
\begin{align*}
  \mu \in \mathcal{A}_{e_0(\cdot)},\qquad  e_0(\cdot)\in \mathcal{E}^{0}_{(1,\infty)}.
\end{align*}
In particular, there holds that 
\begin{align*}
    [\mu]_{ \mathcal{A}_{e_0(\cdot)}}\lesssim [w_{r_1,s_1}]_{\mathcal{A}_{p_{r_1,s_1}(\cdot)}}^{\frac{1}{1-\theta}}[(w_1)_{r_1,s_1}]_{\mathcal{A}_{(p_1)_{r_1,s_1}(\cdot)}}^{\frac{\theta}{1-\theta}}.
\end{align*}

Now define the variable exponents $(p_0(\cdot), q_0(\cdot))$ and the weight $w_0$  through the relations
\begin{align}\label{eq:lem:main2:3}
  w_0 = \mu^{1/r_1-1/s_1},\qquad (p_0)_{r_1,s_1}(\cdot) = e_0(\cdot),\qquad \frac{1}{q(\cdot)} = \frac{1-\theta}{q_0(\cdot)} + \frac{\theta}{q_1(\cdot)}. 
\end{align}

Notice that there exist some constants $c = c(r_1,s_1)$ and $c'=c'(r_1,s_1)$ such that $1/u(\cdot) = c/u_{r_1,s_1}(\cdot) + c',$ for all variable exponents $u(\cdot).$ Therefore, 
\begin{equation}\label{eq:lem:main2:7}
    \begin{split}
        \frac{1}{p(\cdot)}= \frac{c}{p_{r_1,s_1}(\cdot)} + c' &= c\left(\frac{1-\theta}{e_0(\cdot)}+\frac{\theta}{(p_1)_{r_1,s_1}(\cdot)}\right) + c'
        \qquad\text{by \eqref{eq:e0mu}}
        \\ 
    &= c\left(\frac{1-\theta}{(p_0)_{r_1,s_1}(\cdot)}+\frac{\theta}{(p_1)_{r_1,s_1}(\cdot)}\right) + c' 
    \qquad\text{by \eqref{eq:lem:main2:3}}\\ 
    &= (1-\theta) \left(  \frac{c}{(p_0)_{r_1,s_1}(\cdot)} + c' \right) + \theta\left(  \frac{c}{(p_1)_{r_1,s_1}(\cdot)} + c' \right) \\ 
    &= \frac{1-\theta}{p_0(\cdot)}+\frac{\theta}{p_1(\cdot)}.
    \end{split}
\end{equation}

Then, we check that $p_0(\cdot)\in \mathcal{E}_{(r_1,s_1)}^0$ and $q_0(\cdot)\in\mathcal{E}_{(r_2,s_2)}^0.$
Since $(p_0)_{r_1,s_1}(\cdot) = e_0(\cdot)\in \mathcal{E}_{(1,\infty)}^0,$ and $\phi_{r_1,s_1}:[1/s_1,1/r_1] \to [0,1]$ continuously, it is clear that $p_0(\cdot)\in \mathcal{E}_{(r_1,s_1)}^0.$ By assumption we know that $q(\cdot),q_1(\cdot)\in\mathcal{E}_{(r_2,s_2)}^0;$ therefore, if $\theta$ is sufficiently small, also $q_0(\cdot)\in\mathcal{E}_{(r_2,s_2)}^0.$

To finish the  verification that $\big(p_0(\cdot),q_0(\cdot)\big)\in \vec{\mathcal{E}}_{(\vec{r},\vec{s})}^{\gamma},$
we use the right-most identity of \eqref{eq:lem:main2:3} and \eqref{eq:lem:main2:7} to obtain
\begin{align*}
  \gamma = \frac{1}{p(\cdot)} - \frac{1}{q(\cdot)} = \frac{1-\theta}{p_0(\cdot)} + \frac{\theta}{p_1(\cdot)} - \frac{1-\theta}{q_0(\cdot)} - \frac{\theta}{q_1(\cdot)} = (1-\theta)\left( \frac{1}{p_0(\cdot)}-\frac{1}{q_0(\cdot)}\right) +\theta\gamma,
\end{align*}
which immediately gives that $1/p_0(\cdot)-1/q_0(\cdot) = \gamma.$ 

It remains to verify that $w_0\in \mathcal{A}_{(p_0(\cdot),q_0(\cdot)),(\vec{r},\vec{s})}.$ First, notice that, by the same computation as \eqref{eq:lem:main2:7}, it is clear that
\begin{align*}
  \frac{1}{q_{r_2,s_2}(\cdot)} =  \frac{1-\theta}{(q_0)_{r_2,s_2}(\cdot)} +  \frac{\theta}{(q_1)_{r_2,s_2}(\cdot)}.
\end{align*}
Together with \eqref{eq:lem:main2:6}, this shows that 
\begin{align*}
  \frac{1}{p_{r_1,s_1}(\cdot)}= \frac{1}{q_{r_2,s_2}(\cdot)} =  \frac{1-\theta}{(q_0)_{r_2,s_2}(\cdot)} + \frac{\theta}{(q_1)_{r_2,s_2}(\cdot)} =  \frac{1-\theta}{(q_0)_{r_2,s_2}(\cdot)} + \frac{\theta}{(p_1)_{r_1,s_1}(\cdot)}.
\end{align*}
Since the solution $(q_0)_{r_2,s_2}(\cdot)$ to this equation is unique, and we know that $(p_0)_{r_1,s_1}(\cdot)$ is a solution (by repeating the computation \eqref{eq:lem:main2:7}), it follows that $(q_0)_{r_2,s_2}(\cdot) = (p_0)_{r_1,s_1}(\cdot).$ Now, 
repeating the computation \eqref{eq:lem:main2:5} and recalling definitions, we find
\begin{align*}
  \Big( \|w_0\chi_Q\|_{\frac{1}{\frac{1}{q_0(\cdot)}-\frac{1}{s_2}}}\|w_0^{-1}\chi_Q\|_{\frac{1}{\frac{1}{r_1}-\frac{1}{p_0(\cdot)}}}  \Big)^{\frac{1}{1/r_1-1/s_1}} 
  &= 
  \|(w_0)_{r_2,s_2}\chi_Q\|_{\frac{\frac{1}{r_2}-\frac{1}{s_2}}{\frac{1}{q_0(\cdot)}-\frac{1}{s_2}}}\|(w_0)_{r_1,s_1}^{-1}\chi_Q\|_{\frac{\frac{1}{r_1}-\frac{1}{s_1}}{\frac{1}{r_1}-\frac{1}{p_0(\cdot)}}} \\
  &= 
  \|\mu\chi_Q\|_{(q_0)_{r_2,s_2}(\cdot)}\|\mu^{-1}\chi_Q\|_{ (p_0)_{r_1,s_1}'(\cdot)} \\ 
  &= 
  \|\mu\chi_Q\|_{(p_0)_{r_1,s_1}(\cdot)}\|\mu^{-1}\chi_Q\|_{ (p_0)_{r_1,s_1}'(\cdot)}\\ 
  &= 
  \|\mu\chi_Q\|_{e_0(\cdot)}\|\mu^{-1}\chi_Q\|_{ e_0'(\cdot)}.
\end{align*}
Thus we obtain that 
\begin{align*}
  [w_0]_{_{(p_0(\cdot),q_0(\cdot)),(\vec{r},\vec{s})}}= [\mu]_{\mathcal{A}_{e_0(\cdot)}}^{1/r_1-1/s_1} < \infty,
\end{align*}
which completes the proof.
\end{proof}


\section{Proofs of Theorems \ref{thm:main1} and \ref{thm:main2}}\label{sect:last}

We begin by recalling some interpolation results for compact operators and variable exponent Lebesgue spaces. 
The first is the Cwikel--Kalton \cite{CwKa} complex interpolation for compact linear operators. Here, we recall the version as stated in \cite[Theorem 3.1]{HL}.

\begin{theorem}\label{thm:CwKa}
Let $(X_0,X_1)$ and $(Y_0,Y_1)$ be Banach couples and $T$ be a linear operator such that
$T:X_0+X_1\to Y_0+Y_1$ and $T:X_i\to Y_i$ boundedly for $i=0,1$.
Suppose moreover that $T:X_1\to Y_1$ is compact.
Let $[\ ,\ ]_\theta$ be the complex interpolation functor of Calder\'on.
Then also $T:[X_0,X_1]_\theta\to[Y_0,Y_1]_\theta$ is compact for $\theta\in(0,1)$ under any of the following four side conditions:

\begin{enumerate}
  \item\label{it:UMD} $X_1$ has the UMD (unconditional martingale differences) property,
  \item\label{it:Xinterm} $X_1$ is reflexive, and $X_1=[X_0,E]_\alpha$ for some Banach space $E$ and $\alpha\in(0,1)$,
  \item\label{it:Yinterm} $Y_1=[Y_0,F]_\beta$ for some Banach space $F$ and $\beta\in(0,1)$,
  \item\label{it:lattice} $X_0$ and $X_1$ are both complexified Banach lattices of measurable functions on a common measure space.
\end{enumerate}
\end{theorem}

\begin{lemma}\label{lem:CK condition}
If $p_i(\cdot)\in\mathcal{P}$ with $1<(p_{i})_{-}\leq(p_{i})_{+}<\infty$ and $w_i$ are weights satisfying $w_i\in L_{\text{loc}}^{p_i(\cdot)}$ and $w_i^{-1}\in L_{\text{loc}}^{p_{i}'(\cdot)}$, then the spaces $X_i=L^{p_i(\cdot)}_{w_i}$ satisfy the condition \eqref{it:lattice} of Theorem \ref{thm:CwKa}.
\end{lemma}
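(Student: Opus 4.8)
The plan is to verify directly that each space $X_i = L^{p_i(\cdot)}(w_i)$ is a complexified Banach lattice of measurable functions on the common measure space $(\R^d, \text{Lebesgue})$, which is precisely condition \eqref{it:lattice} of Theorem \ref{thm:CwKa}. Since the lattice structure is essentially built into the definition, the real content is to check that $X_0$ and $X_1$ genuinely consist of (equivalence classes of) measurable functions on a \emph{common} measure space, and that the norm is a genuine lattice norm; the hypotheses $w_i \in L^{p_i(\cdot)}_{\loc}$ and $w_i^{-1} \in L^{p_i'(\cdot)}_{\loc}$ are what guarantee this is a reasonable space (containing enough functions, e.g.\ all of $C_c$, and being continuously embedded in $L^1_{\loc}$).

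First I would recall the lattice structure: for a weight $w$, the map $f \mapsto fw$ is a linear isometry from $L^{p(\cdot)}(w)$ onto $L^{p(\cdot)}(\R^d)$, and $L^{p(\cdot)}(\R^d)$ is a Banach function space (hence a Banach lattice) with the pointwise a.e.\ order, because the modular $\rho_{p(\cdot)}$ in \eqref{eq:modular} is monotone: $|f| \le |g|$ a.e.\ implies $\rho_{p(\cdot)}(f/\lambda) \le \rho_{p(\cdot)}(g/\lambda)$ for all $\lambda > 0$, and therefore $\|f\|_{p(\cdot)} \le \|g\|_{p(\cdot)}$ (here we use $p_+ < \infty$, so the $L^\infty$ term is absent and the modular is just an integral of $|f(x)|^{p(x)}$). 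Since multiplication by the positive function $w$ preserves the pointwise order and its absolute value, transporting the lattice structure of $L^{p(\cdot)}(\R^d)$ back through $f \mapsto fw$ shows $L^{p(\cdot)}(w)$ is a Banach lattice of measurable functions on $\R^d$ with $\||f|\|_{L^{p(\cdot)}(w)} = \|f\|_{L^{p(\cdot)}(w)}$ and the a.e.\ pointwise order. Complexifying in the standard way (with the natural modulus) keeps it a complexified Banach lattice. Both $X_0$ and $X_1$ are of this form over the same underlying measure space $\R^d$, so condition \eqref{it:lattice} holds.

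Finally I would address the role of the local integrability hypotheses, which is where a small amount of care is needed: condition \eqref{it:lattice} as used in Theorem \ref{thm:CwKa} (via Cwikel--Kalton) tacitly requires that $X_0$ and $X_1$ form a genuine \emph{Banach couple}, i.e.\ both embed continuously into a common Hausdorff topological vector space. The hypothesis $w_i^{-1} \in L^{p_i'(\cdot)}_{\loc}$ gives, via H\"older's inequality (Lemma \ref{Gen. Holder's ineq.}) on any cube $Q$, that $\|f\chi_Q\|_{L^1} \le \|f w_i \chi_Q\|_{p_i(\cdot)} \|w_i^{-1}\chi_Q\|_{p_i'(\cdot)} \lesssim_Q \|f\|_{L^{p_i(\cdot)}(w_i)}$, so $X_i \hookrightarrow L^1_{\loc}(\R^d)$ continuously; taking $L^1_{\loc}(\R^d)$ with its usual Fr\'echet topology as the ambient space makes $(X_0, X_1)$ a Banach couple. (The companion hypothesis $w_i \in L^{p_i(\cdot)}_{\loc}$ ensures $L^{p_i(\cdot)}(w_i)$ contains $L^\infty_c$, in particular is nontrivial, though this is not strictly needed for \eqref{it:lattice}.) The main obstacle is thus not any deep estimate but simply making sure all the bookkeeping of the Banach-couple/Banach-lattice framework is in place; once that is checked, the lemma is immediate from the definitions.
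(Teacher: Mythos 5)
Your proof is correct, and it takes a more hands-on route than the paper does. The paper's proof is a one-liner: it cites \cite[page 565]{INS} for the fact that $L^{p_i(\cdot)}(w_i)$ is a Banach function space (under exactly the hypotheses $w_i \in L^{p_i(\cdot)}_{\loc}$, $w_i^{-1} \in L^{p_i'(\cdot)}_{\loc}$), and then simply notes that Banach function spaces are in particular complexified Banach lattices of measurable functions on the common measure space $\R^d$. You instead verify the lattice structure from scratch via the isometry $f \mapsto f w_i$ onto $L^{p_i(\cdot)}(\R^d)$ and the monotonicity of the modular, which is a perfectly valid and more self-contained argument. What the paper's citation buys is brevity and the full strength of the Banach function space axioms (ideal property, Fatou property, etc.) at once; what your direct check buys is transparency and independence from the \cite{INS} reference. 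Your closing remarks on the role of the local-integrability hypotheses are a genuine value-added: the lemma itself only asserts condition \eqref{it:lattice}, but you correctly observe that in order to invoke Theorem \ref{thm:CwKa} one also needs $(X_0, X_1)$ to be a Banach couple, and that the embedding $X_i \hookrightarrow L^1_{\loc}$ coming from $w_i^{-1} \in L^{p_i'(\cdot)}_{\loc}$ via H\"older's inequality supplies this, while $w_i \in L^{p_i(\cdot)}_{\loc}$ guarantees nontriviality. The paper leaves this point implicit inside the Banach function space structure; making it explicit is a legitimate improvement in rigor.
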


\begin{proof}
It is known, e.g. from \cite[page 565]{INS}, 
that both $X_i=L^{p_i(\cdot)}_{w_i}$, for $i=0,1$, are Banach function spaces. Hence, they are also complexified Banach lattices of measurable functions on the common measure space $\R^d$.
\end{proof}

Another basic ingredient is interpolation with the change of weights. In the case of weighted variable Lebesgue spaces the following interpolation result is proved in \cite{Medalha} (see also \cite{FKM2022}).

\begin{theorem}[\cite{Medalha}, Theorems 3.4.1 and 3.4.2]\label{thm:SW}
Let $0<\theta<1$. For $i=0,1$, let $q_i(\cdot)\in\mathcal{P}$ be variable exponents satisfying $1<(q_i)_{-}\leq(q_i)_{+}<\infty$ and $w_i$ be weights satisfying $w_i\in L_{\text{loc}}^{q_i(\cdot)}$ and $w_i^{-1}\in L_{\text{loc}}^{q_{i}'(\cdot)}$.
Then 
\begin{equation*}
  [L^{q_0(\cdot)}_{w_0},L^{q_1(\cdot)}_{w_1}]_{\theta}=L^{q_\theta(\cdot)}_{w_{\theta}},
\end{equation*}
where 
\begin{equation*}
  \frac{1}{q_{\theta}(\cdot)}=\frac{1-\theta}{q_0(\cdot)}+\frac{\theta}{q_1(\cdot)}\quad\text{and}\quad w_{\theta}=w_0^{1-\theta}w_1^{\theta}.
\end{equation*}
Moreover, the norms of the spaces $[L^{q_0(\cdot)}_{w_0},L^{q_1(\cdot)}_{w_1}]_{\theta}$ and $L^{q_\theta(\cdot)}_{w_{\theta}}$ are equivalent. 
\end{theorem}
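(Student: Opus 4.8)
Write $X_i:=L^{q_i(\cdot)}(w_i)$ and $X_\theta:=L^{q_\theta(\cdot)}(w_\theta)$. By Lemma~\ref{lem:CK condition} each $X_i$ is a Banach function space on $\R^d$, and the same reasoning --- H\"older's inequality (Lemma~\ref{Gen. Holder's ineq.}) together with Lemma~\ref{lem:homog}, using $\tfrac{1-\theta}{q_0(\cdot)}+\tfrac{\theta}{q_1(\cdot)}=\tfrac{1}{q_\theta(\cdot)}$ and the corresponding identity for the conjugate exponents --- shows $w_\theta\in L^{q_\theta(\cdot)}_{\loc}$ and $w_\theta^{-1}\in L^{q_\theta'(\cdot)}_{\loc}$, so that $X_\theta$ is a Banach function space as well; in particular the bounded functions of bounded support are dense in each of $X_0,X_1,X_\theta$. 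The plan is to establish the two continuous inclusions $X_\theta\hookrightarrow[X_0,X_1]_\theta$ and $[X_0,X_1]_\theta\hookrightarrow X_\theta$ with comparable constants, which together give the asserted identity with equivalent norms.

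For $X_\theta\hookrightarrow[X_0,X_1]_\theta$ I would use the classical device of an explicit analytic family (Stein interpolation with change of weight). Fix $f\in X_\theta$; after normalization assume $\rho_{q_\theta(\cdot)}(fw_\theta)\le1$, and by a preliminary density/truncation reduction (exploiting the a.e.\ finiteness and positivity of $w_0^{\pm1},w_1^{\pm1}$) assume that $g:=fw_\theta$ is bounded with bounded support on which $w_0,w_1$ are bounded above and below. On the strip $S:=\{z\in\C:0\le\Re z\le1\}$ put
\begin{equation*}
  F(z):=\operatorname{sgn}(g)\,|g|^{\,q_\theta(\cdot)\left(\frac{1-z}{q_0(\cdot)}+\frac{z}{q_1(\cdot)}\right)}\,w_0^{-(1-z)}w_1^{-z}.
\end{equation*}
Then $F(\theta)=g\,w_0^{-(1-\theta)}w_1^{-\theta}=f$; under the above reductions $z\mapsto F(z)$ is a bounded continuous map $S\to X_0+X_1$, analytic on the interior, with $t\mapsto F(it)\in X_0$ and $t\mapsto F(1+it)\in X_1$ bounded and continuous. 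Since $|w_0^{\pm it}|=|w_1^{\pm it}|=1$, on the boundary lines one has $|F(it)w_0|=|g|^{\,q_\theta(\cdot)/q_0(\cdot)}$ and $|F(1+it)w_1|=|g|^{\,q_\theta(\cdot)/q_1(\cdot)}$; as $\rho_{q_0(\cdot)}\big(|g|^{\,q_\theta(\cdot)/q_0(\cdot)}\big)=\rho_{q_\theta(\cdot)}(g)\le1$, and likewise on the other line, we get $\sup_t\|F(it)\|_{X_0}\le1$ and $\sup_t\|F(1+it)\|_{X_1}\le1$, hence $\|f\|_{[X_0,X_1]_\theta}\le1$. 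Undoing the normalization and passing to the limit along the dense class yields $\|f\|_{[X_0,X_1]_\theta}\lesssim\|f\|_{X_\theta}$ for all $f\in X_\theta$.

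For the reverse inclusion $[X_0,X_1]_\theta\hookrightarrow X_\theta$ I would argue by duality, using that the associate space of $L^{q(\cdot)}$ is $L^{q'(\cdot)}$ with equivalent norms, so that $\|h\|_{q_\theta(\cdot)}\sim\sup\big\{\int_{\R^d}|h|\,|\varphi|\,\ud x:\ \|\varphi\|_{q_\theta'(\cdot)}\le1\big\}$. Let $f=F(\theta)$ for an admissible $F$ (in the sense of the complex method) with $\|F\|\le1$, and let $\varphi$ satisfy $\|\varphi\|_{q_\theta'(\cdot)}\le1$; build a second admissible family $G(z)$ for the \emph{conjugate} exponents with $G(\theta)=\varphi$ and $\rho_{q_j'(\cdot)}(G(j+it))\le1$ for $j=0,1$, and apply the three-lines lemma to
\begin{equation*}
  \Phi(z):=\int_{\R^d}F(z)\,w_0^{1-z}w_1^{z}\,G(z)\,\ud x,
\end{equation*}
whose boundary values are controlled by $\|F(it)\|_{X_0}\|G(it)\|_{q_0'(\cdot)}$ and $\|F(1+it)\|_{X_1}\|G(1+it)\|_{q_1'(\cdot)}$ through the case $m=2$ of Lemma~\ref{Gen. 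Holder's ineq.}. Since $\Phi(\theta)=\int_{\R^d}f\,w_\theta\,\varphi\,\ud x$, taking the supremum over $\varphi$ and the infimum over admissible $F$ gives $\|fw_\theta\|_{q_\theta(\cdot)}\lesssim\|f\|_{[X_0,X_1]_\theta}$. An equivalent route, avoiding duality, is to invoke Calder\'on's inclusion $[X_0,X_1]_\theta\hookrightarrow X_0^{1-\theta}X_1^{\theta}$ for Banach function spaces and then identify the Calder\'on product pointwise, $L^{q_0(\cdot)}(w_0)^{1-\theta}L^{q_1(\cdot)}(w_1)^{\theta}=L^{q_\theta(\cdot)}(w_\theta)$, by a short modular computation using Lemma~\ref{lem:homog}.

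The main obstacle is not the idea but the legitimacy of the complex-method machinery over variable exponents: one must make the auxiliary families $F(z),G(z)$ genuinely bounded, continuous, and analytic as maps into $X_0+X_1$ (respectively $L^{q_0'(\cdot)}+L^{q_1'(\cdot)}$), which forces the successive density/truncation reductions to data for which the weights are additionally bounded above and below on a set of finite measure, and uses exactly the hypotheses $w_i\in L^{q_i(\cdot)}_{\loc}$, $w_i^{-1}\in L^{q_i'(\cdot)}_{\loc}$; and one must keep track of the multiplicative constants stemming from the non-homogeneity of the Luxemburg norm and from H\"older's inequality so as to obtain equivalence --- not merely coincidence --- of the two norms. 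Since this is precisely the content of \cite[Theorems 3.4.1 and 3.4.2]{Medalha}, in the paper one simply cites it; the above is the route I would follow to reprove it.
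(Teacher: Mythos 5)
The paper does not prove this statement; it is imported verbatim from Medalha's thesis \cite[Theorems~3.4.1 and~3.4.2]{Medalha}, so there is no internal argument to compare against. Your sketch is nonetheless a correct account of the standard Stein--Weiss route for complex interpolation with change of weights. The key computations check out: with $g=fw_\theta$ normalized to $\rho_{q_\theta(\cdot)}(g)\le1$ and $F(z)=\operatorname{sgn}(g)\,|g|^{q_\theta(\cdot)\bigl(\frac{1-z}{q_0(\cdot)}+\frac{z}{q_1(\cdot)}\bigr)}w_0^{-(1-z)}w_1^{-z}$ one indeed has $F(\theta)=f$, and on $\Re z=j$ the unimodularity of $w_k^{\pm it}$ gives $|F(j+it)w_j|=|g|^{q_\theta(\cdot)/q_j(\cdot)}$, whence $\rho_{q_j(\cdot)}\bigl(|g|^{q_\theta(\cdot)/q_j(\cdot)}\bigr)=\rho_{q_\theta(\cdot)}(g)\le1$ and $\|F(j+it)\|_{X_j}\le1$; this yields $L^{q_\theta(\cdot)}(w_\theta)\hookrightarrow[X_0,X_1]_\theta$. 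For the converse your two alternatives are both viable: duality against an analogous analytic family for the conjugate exponents (using that the associate space of $L^{q(\cdot)}$ is $L^{q'(\cdot)}$, and that $\frac{1}{q'_\theta(\cdot)}=\frac{1-\theta}{q'_0(\cdot)}+\frac{\theta}{q'_1(\cdot)}$), or Calder\'on's general inclusion $[X_0,X_1]_\theta\hookrightarrow X_0^{1-\theta}X_1^{\theta}$ together with the pointwise identification $L^{q_0(\cdot)}(w_0)^{1-\theta}L^{q_1(\cdot)}(w_1)^{\theta}=L^{q_\theta(\cdot)}(w_\theta)$ via the modular computation $\rho_{q_j(\cdot)}\bigl(|g|^{q_\theta(\cdot)/q_j(\cdot)}\bigr)=\rho_{q_\theta(\cdot)}(g)$ again. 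You also correctly flag the one genuine technical burden: the truncation/density reductions needed to make $z\mapsto F(z)$ a bounded, continuous, interior-analytic $X_0+X_1$-valued map, for which the hypotheses $1<(q_i)_-\le(q_i)_+<\infty$ and $w_i\in L^{q_i(\cdot)}_{\loc}$, $w_i^{-1}\in L^{q_i'(\cdot)}_{\loc}$ are exactly what is used (they give the Banach function space structure and density of bounded, compactly supported functions, cf.~Lemma~\ref{lem:CK condition}). No gap in the outline; this is the line of argument one expects in the cited reference.
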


\begin{remark}\label{rmk:CK condition}
Notice that the weights belonging to the off-diagonal full/limited range variable classes as in Definitions \ref{def:variableApq}/\ref{variableAp,r,s} satisfy the assumptions and conclusions of Lemma \ref{lem:CK condition} and Theorem \ref{thm:SW}.
\end{remark}

We now turn to proving Theorems \ref{thm:main1} and \ref{thm:main2} (the latter automatically yields the first as a special case, in fact) by combining Lemma \ref{lem:main2} with Theorems \ref{thm:CwKa} and \ref{thm:SW}.

\begin{proof}[Proof of Theorem \ref{thm:main2}]
Fix some $\big(p(\cdot),q(\cdot)\big) \in \vec{\mathcal{E}}_{(\vec{r},\vec{s})}^{\gamma}$ and $w\in\mathcal{A}_{(p(\cdot),q(\cdot)), (\vec{r},\vec{s})}$. We will show that $T$ is $L^{p(\cdot)}_w\to L^{q(\cdot)}_w$ compact. 
By Lemma \ref{lem:main2}, there exists some $\eta\in(0,1)$ so that for all $\theta\in(0,\eta)$ there holds that 
$\big(p_0(\cdot),q_0(\cdot)\big)\in\vec{\mathcal{E}}_{(\vec{r},\vec{s})}^{\gamma}$ and $w_0\in\mathcal{A}_{(p_0(\cdot),q_0(\cdot)), (\vec{r},\vec{s})},$ where these objects are defined through the relations
\begin{equation*}
  \frac{1}{p(\cdot)}=\frac{1-\theta}{p_0(\cdot)}+\frac{\theta}{p_1(\cdot)},\qquad\frac{1}{q(\cdot)}=\frac{1-\theta}{q_0(\cdot)}+\frac{\theta}{q_1(\cdot)},\qquad
  w=w_0^{1-\theta}w_1^{\theta}.
\end{equation*}
Now we fix any such $\theta \in (0,\eta)$ (e.g. $\theta = \eta/2$) and then by Theorem \ref{thm:SW} (applicable due to Remark \ref{rmk:CK condition})
there holds that
\begin{equation*}
    \begin{split}
        [L^{p_0(\cdot)}_{w_0},L^{p_1(\cdot)}_{w_1}]_\theta=L^{p(\cdot)}_w,\qquad 
        [L^{q_0(\cdot)}_{w_0},L^{q_1(\cdot)}_{w_1}]_\theta=L^{q(\cdot)}_w.
    \end{split}
\end{equation*}

It remains to interpret everything into the context of Theorem \ref{thm:CwKa}.
We let $X_i:=L^{p_i(\cdot)}_{w_i}$ and $Y_i:=L^{q_i(\cdot)}_{w_i}.$ We know that $T$ maps $X_0+X_1\to Y_0+Y_1$ (structural assumption on $T$), $T:X_0\to Y_0$ is bounded and $T: X_1\to Y_1$ is compact (both are assumed). By Lemma \ref{lem:CK condition} and Remark \ref{rmk:CK condition}, the condition \eqref{it:lattice} of Theorem \ref{thm:CwKa} is satisfied by $X_i=L^{p_i(\cdot)}_{w_i},$ for $i=0,1.$ Thus, by Theorem \ref{thm:CwKa}, it follows that $T$ is $L^{p(\cdot)}_w = [X_0,X_1]_\theta \to[Y_0,Y_1]_\theta=L^{q(\cdot)}_w$ compact, just as we wanted to prove.
\end{proof}

\end{document}